\newcommand{\boldgreek}[1]{\mbox{\boldmath$#1$}}
\newcommand{\R}{I\kern-0.37emR}
\newcommand{\ny}{n\rightarrow\infty}
\newcommand{\Q}{I\kern-0.37emP}
\begin{document}
\title*{Affine equivariant {rank-weighted L-estimation} of multivariate location}
% Use \titlerunning{Short Title} for an abbreviated version of
% your contribution title if the original one is too long
\author{Pranab Kumar Sen,~ Jana Jure\v{c}kov\'a~ and Jan Picek}
% Use \authorrunning{Short Title} for an abbreviated version of
% your contribution title if the original one is too long
\institute{Pranab Kumar Sen \at Departments of Statistics and Biostatistics, University of North Carolina at Chapel Hill, USA,\\ \email{pksen@bios.unc.edu}
\and Jana Jure\v{c}kov\'a \at Department of Probability and Statistics, Faculty of Mathematics and Physics, Charles University, CZ-186 75 Prague 8, Czech Republic, \email{jurecko@karlin.mff.cuni.cz}
\and Jan Picek \at Department of Applied Mathematics, Technical University of Liberec, Czech Republic, \email{jan.picek@tul.cz}}
%
% Use the package "url.sty" to avoid
% problems with special characters
% used in your e-mail or web address
%
\maketitle

\abstract{{In the multivariate one-sample location model, we propose a class of fle\-xi\-ble robust, affine-equivariant L-estimators of location, for distributions invoking affine-invariance of Mahalanobis distances of individual observations. An involved iteration process for their computation is numerically illustrated.}} 

\section{Introduction}
\label{sec:1}

The 
affine-equivariance and its dual  affine-invariance are natural generalizations of univariate translation-scale equivariance and invariance notions (Eaton 1983). 
Consider the group $\mathcal C$ of transformations of $\mathbb R_p$ to $\mathbb R_p$: 
\begin{equation}\label{8.2.2} 
\mathbf X\mapsto\mathbf Y=\mathbf B\mathbf X+\mathbf b, \; \mathbf b\in\mathbb R_p
\end{equation}
where $\mathbf B$ is a positive definite $p\times p$ matrix. Generally with the choice of $\mathbf B$ we do  not transform dependent coordinates of $\mathbf X$ to stochastically independent coordinates of $\mathbf Y.$ This is possible when $\mathbf X$ has  a multi-normal distribution with mean vector $\boldgreek\theta$ and positive definite dispersion matrix $\boldgreek\Sigma,$
when letting  $\boldgreek\Sigma^{-1}=\mathbf B\mathbf B^{\top},$ so that $\mathbb E\mathbf Y=\boldgreek\xi =\mathbf B\boldgreek\theta+\mathbf b$ and dispersion matrix
$\mathbf B\boldgreek\Sigma\mathbf B^{\top}=\mathbf I_p.$ 
{To construct and study the affine equivariant estimator of the location $\boldgreek\theta$  
we need to consider some affine-invariant (AI) norm.}
The most well-known affine invariant norm  
is the Mahalanobis norm, whose squared version is  
\begin{equation}\label{8.2.3}
 \boldgreek\Delta^2=(\mathbf X- \boldgreek\theta)^{\top}\boldgreek\Sigma^{-1}(\mathbf X-\boldgreek\theta)=\|\mathbf X-\boldgreek\theta\|^2_{\Sigma}
\end{equation}
where $\boldgreek\Sigma$ is the dispersion matrix of $\mathbf X.$ 
{To incorporate this norm, we need to use its empirical version based on independent sample $\mathbf X_1,\ldots,\mathbf X_n,$ 
namely
$$s_{ij}=\frac 12(\mathbf X_i-\mathbf X_j)^{\top}\mathbf V_n^{*-1}(\mathbf X_i-\mathbf X_j), \; 1\leq i<j\leq n$$
where $\mathbf V_n^*=(n(n-1))^{-1}\sum_{1\leq i<j\leq n}(\mathbf X_i-\mathbf X_j)(\mathbf X_i-\mathbf X_j)^{\top}.$ To avoid redundancy, we may consider the reduced set 
\begin{eqnarray}\label{8.5.11}
&&\tilde{d}_{ni}=(\mathbf X_i-\mathbf X_n)^{\top}\tilde{\mathbf V}_n^{-1}(\mathbf X_i-\mathbf X_n), \; i=1,\ldots,n-1\\
&&\tilde{\mathbf V}_n=\sum_{i=1}^{n-1}(\mathbf X_i-\mathbf X_n)(\mathbf X_i-\mathbf X_n)^{\top}
\end{eqnarray}
 which forms the maximal invariant (MI) 
 with respect to affine transformations  (\ref{8.2.2}).
 } 
{An equivalent form of the maximal invariant is
\begin{eqnarray}\label{8.5.11b}
&&{d}_{ni}=(\mathbf X_i-\overline{\mathbf X}_n)^{\top}{\mathbf V}_n^{-1}(\mathbf X_i-\overline{\mathbf X}_n), \; i=1,\ldots,n\\
&&{\mathbf V}_n=\sum_{i=1}^{n}(\mathbf X_i-\overline{\mathbf X}_n)(\mathbf X_i-\overline{\mathbf X}_n)^{\top}
\end{eqnarray}
(Obenchain (1971)).}
{Note that all the $d_{ni}$ are between 0 and 1 and their sum equals to $p.$ Because $d_{ni}$ are exchangeable, bounded random variables, all nonnegative, with a constant sum equal to $p,$  the asymptotic properties of the array $(d_{n1},\ldots,d_{nn})^{\top}$ follow from Chernoff and Teicher (1958) and Weber (1980). Similarly, $\sum_{i=1}^{n-1} \tilde{d}_{ni}=p.$ 
Neither $\overline{\mathbf X}_n$ nor $\mathbf V_n$ is robust against outliers and gross errors contamination. As such, we are motivated to replace $\overline{X}_n$ and $\mathbf V_n$ by suitable robust versions and incorporate them in the formulation of a robust affine equivariant estimator of $\boldgreek\theta.$ If $\widehat{\boldgreek\theta}_n$ is some affine equivariant estimator of $\boldgreek\theta,$ then writing 
$$\widehat{\mathbf V}_n=
\sum_{i=1}^n(\mathbf X_i-\widehat{\boldgreek\theta}_n)(\mathbf X_i-\widehat{\boldgreek\theta}_n)^{\top}$$ 
we may note that $\widehat{\mathbf V}_n$ is smaller than $\mathbf V_n$ in the matrix sense. However, it cannot be claimed that the  Mahalanobis distances (\ref{8.5.11b}) can be made shorter by using 
 $\widehat{\boldgreek\theta}_n$ instead of $\overline{\mathbf X}_n,$ because 
$\sum_{i=1}^n(\mathbf X_i-\widehat{\boldgreek\theta}_n)^{\top}\widehat{\mathbf V}_n^{-1}(\mathbf X_i-\widehat{\boldgreek\theta}_n)=p.$
 Our motivation is to employ the robust Mahalanobis distances in the formulation of robust affine equivariant estimator of $\boldgreek\theta,$ through a tricky ranking of the Mahalanobis distances in (\ref{8.5.11b}) and an iterative procedure in updating an affine equivariant robust estimator of $\boldgreek\theta.$
 
The robust estimators in the multivariate case, discussed in detail in Jure\v{c}kov\'a et al. (2013), are not automatically affine equivariant. With due emphasize on the spatial median, some other estimators were considered by a host of researchers, and we refer to Oja (2010) and Serfling (2010) for a detailed account. Their emphasis is on the spatial median and spatial quantile functions defined as follows:}\\
 Let $\mathcal B_{p-1}(\mathbf 0)$ be the open unit ball. Then the $\mathbf u$-th \textit{spatial quantile} $Q_F(\mathbf u), \; \mathbf u\in \mathcal B_{p-1}(\mathbf 0)$ is defined as the  solution ${\mathbf x}=Q_F(\mathbf u)$ %=\boldgreek\xi_u$ 
of the equation
%\be\label{8.5.4}
$$\mathbf u=\mathbb E\left\{\frac{{\mathbf x}-\mathbf X}{\|{\mathbf x}-\mathbf X\|}\right\}, \quad \mathbf u\in\mathcal B_{p-1}(\mathbf 0).$$
%\ee
Particularly, $Q_F(\mathbf 0)$ is the spatial median. It is equivariant with respect to $\mathbf y =\mathbf B\mathbf x+\mathbf b, \; \mathbf b\in\mathbb R_p, \; \mathbf B$ positive definite and \textit{orthogonal}. However, the spatial quantile function may not be affine-equivariant for all $\mathbf u.$

Among various approaches to multivariate quantiles  we refer to Chakraborty (2001), Roelant and van Aelst (2007), Hallin et al. (2010), Kong and Mizera (2012), Jure\v{c}kov\'a et al. (2013), among others. Lopuha$\ddot{\rm a}$ \& Rousseeuw (1991) and Zuo (2003, 2004 2006), Lopuha$\ddot{\rm a}$ \& Rousseeuw (1991) and Zuo (2003, 2004 2006), among others, studied robust affine-equivariant estimators with high breakdown point, {based on projection debths}. {An alternative approach based on the notion of the depth function and associated U-statistics has been initiated by Liu at al. (1999). Notice that every affine-invariant function of $(\mathbf{X}_1,\ldots,\mathbf{X}_n)$ depends on the $\mathbf X_i$ only through the maximal invariant; particularly, this applies to the ranks of the $d_{ni}$ and also to all affine invariant depths considered in the literature. %However, their estimators, though L-estimators, may not be automatically affine equivariant. 
In our formulation, affine equivariance property is highlighted and accomplished through a ranking of the Mahalanobis distances at various steps.}

\section{Affine equivariant linear estimators}
Let $\mathbf X\in\mathbb R_p$ be a random vector with a distribution function $F.$ {Unless stated other\-wi\-se, we
assume that $F$ is absolutely continuous.} %and also that $\{\boldgreek X_1, . . . , \boldgreek X_n\}$ is a
%random sample from $F.$ 
Consider the qroup $\mathcal C$ of affine transformations $\mathbf X\mapsto\mathbf Y=\mathbf B\mathbf X+\mathbf b$ with $\mathbf B$
nonsingular of order $p\times p,$ $\mathbf b\in\mathbb R_p.$ Each transformation generates a distribution function $G$ also defined on $\mathbb R_p,$ which we denote $G=F_{B,b}.$
 A vector-valued functional $\boldgreek\theta(F),$ designated as a suitable measure of location of $F,$  is said to be an \textit{affine-equivariant location functional}, provided
$$\boldgreek\theta(F_{B,b})= \mathbf B\boldgreek\theta(F)+\mathbf b \quad \forall\mathbf b\in\mathbb R_p, \; \mathbf B \; \mbox{ positive definite}.$$
Let $\boldgreek\Gamma(F)$ be a matrix valued functional of $F,$ designated as a measure of the \textit{scatter} of $F$ around its location $\boldgreek\theta$ and capturing
its \textit{shape} in terms of variation and covariation of the coordinate variables. $\boldgreek\Gamma(F)$ is often termed a \textit{covariance functional}, and a natural requirement is that it is independent of $\boldgreek\theta(F).$ It is termed an \textit{affine-equivariant covariance functional}, provided
$$\boldgreek\Gamma(F_{B,b})=\mathbf B\boldgreek\Gamma(F)\mathbf B^{\top} \quad \forall\mathbf b\in\mathbb R_p, \; \mathbf B \; \mbox{ positive definite}.$$ 

We shall construct {a class of affine equivariant} L-estimators of location parameter, starting with initial affine-equivariant location estimator and scale functional, {and then iterating them to a higher robustness}. For simplicity we start with the sample mean vector $\overline{\mathbf X}_n=\frac 1n\sum_{i=1}^n\mathbf X_i$ and with the matrix $\boldgreek V_n =\mathbf A_n^{(0)}
=\sum_{i=1}^n(\mathbf X_i-\overline{\mathbf X}_n)(\mathbf X_i-\overline{\mathbf X}_n)^{\top}=n\widehat{\boldgreek\Sigma}_n, \; n>p,$ where $\widehat{\boldgreek\Sigma}_n$ is the sample covariance matrix.  
Let
$R_{ni}=\sum_{j=1}^nI[d_{nj}\leq d_{ni}]$ be the rank of $d_{ni}$ among $d_{n1},\ldots,d_{nn}, \; i=1,\ldots,n,$ and denote $\mathbf R_n=(R_{n1},\ldots,R_{nn})^{\top}$ the vector of ranks. Because $F$ is continuous, the probability of ties is 0, hence the ranks are well defined.  
Note that $d_{ni}$ and $R_{ni}$ are affine-invariant, $i=1,\ldots,n.$ Moreover, the $R_{ni}$ are invariant under any strictly monotone transformation of $d_{ni}, \; i=1,\ldots,n.$ Furthermore, each $\mathbf X_{i}$ is {trivially} affine-equivariant. We introduce the following {(Mahalanobis)} ordering of $\mathbf X_1,\ldots,\mathbf X_n:$
\begin{equation}\label{8.5.12a}
\mathbf X_i\prec \mathbf X_j \; \Leftrightarrow  \; d_{ni}<d_{nj}, \; i\neq j=1,\ldots,n.
\end{equation}
This {affine invariant ordering} leads to vector of order statistics $\mathbf X_{n:1}\prec\ldots\prec\mathbf X_{n:n}$ {of the sample 
$\mathbb X_n$.}   
{In the univariate case with the order statistics $X_{n:1}\leq\ldots\leq X_{n:n}$, we can consider the $k$-order \textit{rank weighted mean} (Sen (1964)) defined as
$$T_{nk} =\left(
\begin{array}{c}
n\\
2k+1\\
\end{array}
\right)^{-1}
\sum_{i=k+1}^{n-k}
\left(
\begin{array}{c}
i-1\\
k\\
\end{array}
\right)
\left(
\begin{array}{c}
n-1\\
k\\
\end{array}
\right)X_{n:i}, \quad k=0,\ldots, \left[\frac{n+1}{2}\right].
$$
For $k=0, \; T_{nk}=\overline{X}_n$ and $k=[(n+1)/2]$ leads to the median $\widetilde{X}_n.$ In the multivariate case, the ordering is induced by the non-negative $d_{ni}$, and the smallest $d_{ni}$ {corresponds to the smallest \textit{outlyingness} from the center}, or to \textit{the nearest neighborhood {of the center}}. Keeping that in mind, we can conceive by a sequence $\{k_n\}$ of nonnegative integers, such that 
$k_n$ is $\nearrow$ in $n,$ but $n^{-1/2}k_n$ is $\searrow$ in $n,$ and for fixed $k$ put
$$\mathbf L_{nk}=\left(
\begin{array}{c}
k_n\\
k\\
\end{array}
\right)^{-1}\sum_{i=1}^nI\left[R_{ni}\leq k_n\right]\left(\begin{array}{c}
k_n-R_{ni}\\
k-1\\
\end{array}
\right)\mathbf X_i.$$
$\mathbf L_{nk}$ is affine-equivariant, because the $d_{ni}$ are affine invariant and the $\mathbf X_i$ are {trivially} affine equivariant. Of particular interest is the case of $k=1,$ i.e.,
$$\mathbf L_{n1}=k_n^{-1}\sum_{i=1}^nI\left[R_{ni}\leq k_n\right]\mathbf X_i$$
representing a trimmed, rank-weighted, nearest neighbor (NN) affine-equivariant estimator of $\boldgreek\theta.$ In the case $k=2$ we have
$$\mathbf L_{n2}=\left(\begin{array}{c}
k_n\\2\\\end{array}\right)^{-1}
\sum_{i=1}^nI\left[R_{ni}\leq k_n\right](k_n-R_{ni})\mathbf X_i$$
}
{which can be rewritten as $\quad\mathbf L_{n2}=\sum_{i=1}^nw_{nR_{ni}}\mathbf X_i$
with the weight-function
$$w_{ni}=\left\{\begin{array}{lll}
\left(\begin{array}{c}
k_n\\
2\\
\end{array}\right)^{-1}(k_n-i)&\ldots & i=1,\ldots,k_n;\\[3mm]  
\quad 0 &\ldots& i>k_n.\\
\end{array}
\right.
$$ 
}
We see that $\mathbf L_n$ puts greater influence for $R_{ni}=1$ or 2, and $w_{nk_n}=0; \; w_{n1}=2/k_n.$ For $k\geq 3,$ even greater weights will be given to $R_{ni}=1$ or 2, etc. %All these $\mathbf L_{nk}$ follow the kernel smoothing standpoint. 
For large $n$ we can use the Poisson weights, following Chaubey and Sen (1996): %and  with a $\lambda<1,$ as
\begin{eqnarray*}
%&&w_{ni}^*=\rm e^{-\lambda}\lambda^i/i!,\quad i\geq 0,\\
&&w^0_{ni}=\left(1-\rm e^{-\lambda}\right)^{-1}\frac{\rm e^{-\lambda}\lambda^i}{i!}, \; \lambda<1, \; i=1,2,\ldots .
\end{eqnarray*}
A typical $\lambda$ is chosen somewhere in the middle of $[0,1].$ Then
$\mathbf L_n^0=\sum_{i=1}^nw_{nR_{ni}}^0\mathbf X_i$
represents an untrimmed smooth affine-equivariant L-estimator of $\boldgreek\theta;$ for $\lambda\rightarrow 0$ we get the median affine-equivariant estimator, while  $\lambda\rightarrow 1$ gives a version of $L_{n2}$-type estimator. %Generally, one can take $w_{nk}\propto k^{-4}, \; 1\leq k\leq n$ and $k_n=o(n^{1/2})$ as $\ny.$ 
If $\lambda$ is chosen close to 1/2 and $k_n=o(\sqrt{n}),$ then tail $\sum_{j>k_n}w_{nj}^{(0)}$ converges exponentially to 0, implying a fast negligibility of the tail. {Parallelly, the weights $w_n(i)$ can be chosen as the nonincreasing rank scores $a_n(1)\geq a_n(2)\geq\ldots\geq a_n(n).$} %in a NN setup. 

To diminish the influence of the initial estimators, we can recursively continue in the same way: Put $\mathbf L_n^{(1)}=\mathbf L_n$ and define in the next step:
\begin{eqnarray*}
&&\mathbf A_n^{(1)}=\sum_{i=1}^n(\mathbf X_i-\mathbf L_n^{(1)})(\mathbf X_i-\mathbf L_n^{(1)})^{\top}\\
&&d_{ni}^{(1)}=(\mathbf X_i- \mathbf L_n^{(1)})^{\top}(\mathbf A_n^{(1)})^{-1}(\mathbf X_i- \mathbf L_n^{(1)})\\
&&R_{ni}^{(1)}=\sum_{j=1}^nI[d_{nj}^{(1)}\leq d_{ni}^{(1)}], \;  i=1,\ldots,n, \quad \mathbf R_n^{(1)}=(R_{n1}^{(1)},\ldots,R_{nn}^{(1)})^{\top}.
\end{eqnarray*} 
The second-step estimator is $\mathbf L_n^{(2)}=\sum_{i=1}^nw_n(R_{ni}^{(1)})\mathbf X_i.$ In this way we proceed, so at the $r$-th step we define 
$\mathbf A_n^{(r)}, \; d_{ni}^{(r)}, \; 1\leq i\leq n$ and the ranks $\mathbf R_n^{(r)}$ analogously, and get the $r$-step estimator
\begin{equation}\label{8.5.13}
\mathbf L_n^{(r)}=\sum_{i=1}^nw_n(R_{ni}^{(r-1)})\mathbf X_i, \; r\geq 1.
\end{equation}
Note that the $d_{ni}^{(r)}$ are affine-invariant for every $1\leq i\leq n$ and for every $r\geq 0.$ Hence,
applying an affine transformation $\mathbf Y_i=\mathbf B\mathbf X_i+\mathbf b, \; \mathbf b\in\mathbb R_p, \; \mathbf B$ positive definite, we see that
\begin{equation}\label{8.5.14}
\mathbf L_n^{(r)}(\mathbf Y_1,\ldots,\mathbf Y_n)=\mathbf B \mathbf L_n^{(r)}(\mathbf X_1,\ldots,\mathbf X_n)+\mathbf b.
\end{equation}
Hence, the estimating procedure preserves the affine equivariance at each step and $\mathbf L_n^{(r)}$ is an affine-equivariant L-estimator of $\boldgreek\theta$ for every $r.$  \\

The algorithm proceeds as follows:
\begin{description}
	\item{(1)}~ Calculate $\overline{\mathbf X}_n$ and $\mathbf A_n^{(0)}=\sum_{i=1}^n(\mathbf X_i-\overline{\mathbf X}_n)(\mathbf X_i-\overline{\mathbf X}_n)^{\top}.$
	\item{(2)}~ Calculate $d_{ni}^{(0)}=(\mathbf X_i-\overline{\mathbf X}_n)^{\top}(\mathbf A_n^{(0)})^{-1}(\mathbf X_i-\overline{\mathbf X}_n), \; 1\leq i\leq n.$ 
	\item{(3)}~ Determine the rank $R_{ni}^{(0)}$ of $d_{ni}^{(0)}$ among $d_{n1}^{(0)},\ldots,d_{nn}^{(0)}, \; i=1,\ldots,n.$
	\item{(4)}~ Calculate the scores $a_n(i), \; i=1,\ldots,n$ 
	\item{(5)}~ Calculate the first-step estimator $\mathbf L_n^{(1)}=\sum_{i=1}^na_n(R_{ni}^{(0)})\mathbf X_i.$
	\item{(6)}~ $\mathbf A_n^{(1)}=\sum_{i=1}^n(\mathbf X_i-\mathbf L_n^{(1)})(\mathbf X_i-\mathbf L_n^{(1)})^{\top}.$
	\item{(7)}~ $d_{ni}^{(1)}=(\mathbf X_i-\mathbf L_n^{(1)})^{\top}(\mathbf A_n^{(1)})^{-1}(\mathbf X_i-\mathbf L_n^{(1)}), \; 1\leq i\leq n.$
	\item{(8)}~ $R_{ni}^{(1)} =$ the rank of $d_{ni}^{(1)}$ among $d_{n1}^{(1)},\ldots,d_{nn}^{(1)}, \; i=1,\ldots,n.$
	\item{(9)}~ $\mathbf L_n^{(2)}=\sum_{i=1}^na_n(R_{ni}^{(1)})\mathbf X_i.$
	\item{(10)}~Repeat the steps (6)--(9).
\end{description}

\noindent

The estimator $\mathbf L_{n}^{(r)}$ is a linear combination of order statistics corresponding to independent random vectors $\mathbf X_1,\ldots,\mathbf X_n,$ with random coefficients based on the exchangeable $d_{ni}^{(r)}.$ The asymptotic distribution of $\mathbf L_{n}^{(r)}$  under fixed $r$ and for $n\rightarrow\infty$ is a problem for a future study, along with the asymptotic distribution of the $d_{ni}^{(r)}$ and of the rank statistics. % {For the study of the asymptotic properties of the $\mathbf L_{n}^{(r)},$ %and the involved iteration process, Ftheir
For the moment, let us briefly recapitulate some asymptotic properties of the $d_{ni}^{(r)}.$  Note that $\sum_{i=1}^nd_{ni}^{(r)}=p \quad \forall~r\geq 0,$ and that the $d_{ni}^{(r)}$ are exchangeable nonnegative random variables {with a constant sum} and $\mathbb E(d_{ni}^{(r)})=\frac pn$ for every $r\geq 0.$ Let $\delta_{ni}^{(r)}=(\mathbf X_i-\mathbf L_{n}^{(r)})^{\top}\boldgreek\Sigma^{-1}(\mathbf X_i-\mathbf L_{n}^{(r)})$ and $\delta_i^*=(\mathbf X_i-\boldgreek\theta)^{\top}\boldgreek\Sigma^{-1}(\mathbf X_i-\boldgreek\theta), \; 1\leq i\leq n, \; r\geq 1$ {where $\boldgreek{\Sigma}$ is the covariance matrix of $\mathbf X_1.$} Let $G_n^{(r)}(y)=P\{nd_{ni}^{(r)}\leq y\}$ be the distribution function of the ${n}d_{ni}^{(r)}$ {and let $\widehat{G}_n^{(r)}(y)=n^{-1}\sum_{i=1}^nI[nd_{ni}^{(r)}\leq y], \; y\in\mathbb R^+$ be the empirical distribution function. Side by side, let $G_{nr}^*(y)=P\{\delta_{ni}^{(r)}\leq y\}$ and $G^*(y)=P\{\delta_i^*\leq y\}$ be the distribution function of  $\delta_{ni}^{(r)}$
and $\delta_i^*$ respectively. By the Slutzky theorem, 
$$|G_{nr}^*(y)-G^*(y)|\rightarrow 0 \; \mbox{ as } \; \ny.$$
Moreover, by the Courant theorem,
$$\rm{Ch}_{min}(\mathbf A\mathbf B^{-1})=\inf_{\mathbf x}~\frac{\mathbf x^{\top}\mathbf A\mathbf x}{\mathbf x^{\top}\mathbf B\mathbf x}\leq \sup_{\mathbf x}~\frac{\mathbf x^{\top}\mathbf A\mathbf x}{\mathbf x^{\top}\mathbf B\mathbf x}=\rm{Ch}_{max}(\mathbf A\mathbf B^{-1}),$$
we have
$$\max_{1\leq i\leq n}\left|\frac{nd_{ni}^{(r)}}{\delta_{ni}^{(r)}}-1\right|\leq \max\left\{\left|\rm{Ch}_{max}\left(\frac 1n(\mathbf A_n^{(r)})^{-1}\boldgreek\Sigma\right)-1\right|,\left|\rm{Ch}_{min}\left(\frac 1n(\mathbf A_n^{(r)})^{-1}\boldgreek\Sigma\right)-1\right|\right\}$$ 
so that $\left[\frac 1n\mathbf A_n^{(r)}\stackrel{p}{\rightarrow}\boldgreek\Sigma\right] \; \Longrightarrow |\widehat{G}_n^{(r)}-G_{nr}^*|\rightarrow 0.$ In a similar way, $|\delta_{ni}^{(r)}-\delta_i^*|\ll \|\mathbf L_{nk}^{(r)}-\boldgreek\theta\|,$ where the right-hand side is $O_p(n^{-1/2}).$
Because $d_{ni}^{(r)}$ are exchangeable, bounded and nonnegative random variables, %and $\sum_{i=1}^nd_{ni}^{(r)}=(n-1)p$; then 
one can use the Hoeffding (1963) inequality  
to verify that for every $c_n>0,$ there exist positive constants $K_1$ and $\nu$ for which 
$$P\left\{|\widehat{G}_n^{(r)}(y)-G_n^{(r)}(y)|>c_n\right\}\leq K_1\rm{e}^{-\nu n c_n^2}.$$
Thus, using $c_n=O\left(n^{1/2}\log n\right)$ can make the right-hand side to converge at any power rate with $\ny.$ This leads to the following lemma. 
\begin{lemma}\label{Lemma8.1}
As $\ny,$ 
\begin{eqnarray} \label{8.5.15}
&\sup_{d\in\mathbb R^+}\left\{|\widehat{G}_n^{(r)}(y)-G_n^{(r)}(y)-\widehat{G}_n^{(r)}(y^{\prime})+G_n^{(r)}(y^\prime)|: \; |y-y^{\prime}|\leq 
n^{-1/2}\sqrt{2\log n}\right\}&\nonumber\\
&\stackrel{a.s.}{=}O(n^{-\frac 34}\log n). &
\end{eqnarray} 
\end{lemma}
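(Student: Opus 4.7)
The plan is to upgrade the pointwise Hoeffding-type bound recalled just before the lemma into a uniform modulus-of-continuity bound, via a discretization plus Borel--Cantelli argument. First, I would apply the same exchangeable-Hoeffding reasoning to the \emph{increment} rather than to the value of the empirical process. Fix $y'<y$; the indicators $I[y'<nd_{ni}^{(r)}\le y]$, $i=1,\dots,n$, are exchangeable, take values in $\{0,1\}$, and have mean $G_n^{(r)}(y)-G_n^{(r)}(y')$. Writing
$$\Delta_n(y,y'):=\widehat{G}_n^{(r)}(y)-\widehat{G}_n^{(r)}(y')-\bigl[G_n^{(r)}(y)-G_n^{(r)}(y')\bigr],$$
the same Hoeffding inequality that the authors quote yields, for every $c>0$,
$$P\{|\Delta_n(y,y')|>c\}\le K_1\exp\{-\nu n c^2\},$$
with constants $K_1,\nu$ independent of $y,y'$.

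Second, I would discretize the half-line. Because $\sum_i d_{ni}^{(r)}=p$ is bounded, Markov's inequality confines essentially all mass of $nd_{ni}^{(r)}$ to $[0,M_n]$ with $M_n=n^{\alpha}$ for some fixed $\alpha>0$ (errors outside decay polynomially and cause no trouble). Partition $[0,M_n]$ into a mesh of size $h_n=n^{-2}$, producing $N_n=O(n^{2+\alpha})$ grid points $y_0<y_1<\dots<y_{N_n}$. For any $y,y'\in\mathbb R^+$ with $|y-y'|\le n^{-1/2}\sqrt{2\log n}$, snap each of $y,y'$ to the nearest grid point $y_\ast,y'_\ast$; by monotonicity of $\widehat{G}_n^{(r)}$ and $G_n^{(r)}$,
$$|\Delta_n(y,y')-\Delta_n(y_\ast,y'_\ast)|\le 2\sup_{|u-v|\le h_n}|G_n^{(r)}(u)-G_n^{(r)}(v)|+O(h_n),$$
and the absolute continuity of $F$ makes the right-hand side $O(h_n)=O(n^{-2})$, which is negligible compared to the target rate.

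Third, I would apply the Hoeffding bound with $c_n=C\,n^{-3/4}\log n$ and take a union bound over the $O(N_n^2)$ pairs of grid points with $|y_\ast-y'_\ast|\le n^{-1/2}\sqrt{2\log n}$:
$$P\Bigl\{\max_{(y_\ast,y'_\ast)}|\Delta_n(y_\ast,y'_\ast)|>c_n\Bigr\}\le K_1 N_n^2\,\exp\{-\nu n c_n^2\}=O\bigl(n^{2(2+\alpha)}\bigr)\exp\{-\nu C^2\sqrt{n}\log^2 n\},$$
which is summable in $n$. Borel--Cantelli then yields the stated $O(n^{-3/4}\log n)$ almost sure bound, and combining it with the negligible discretization error delivers \eqref{8.5.15}.

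The main obstacle is justifying the exponential inequality in the exchangeable, constant-sum setting of the $d_{ni}^{(r)}$, since the coordinates are not independent. I would invoke exactly the same device the authors already use to obtain their one-point Hoeffding bound (Hoeffding's 1963 inequality for bounded exchangeable summands, or equivalently the sampling-without-replacement version), applied now to the indicator $I[y'<nd_{ni}^{(r)}\le y]$; the argument is identical because boundedness and exchangeability transfer verbatim. Everything else—the mesh size, the grid cardinality, and the choice $c_n=Cn^{-3/4}\log n$—is routine balancing so that $nc_n^2$ dominates $\log N_n$ by a polynomial factor in $n$.
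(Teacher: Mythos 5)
Your overall architecture --- applying the exchangeable-Hoeffding bound to increments, a polynomial mesh on $[0,M_n]$, a union bound, Borel--Cantelli, with monotonicity of $\widehat{G}_n^{(r)}$ and $G_n^{(r)}$ absorbing the off-grid error --- is exactly the route the paper's (very terse) outline indicates. But your third step contains a fatal miscalculation at the crux: with $c_n=Cn^{-3/4}\log n$ you get $nc_n^2=C^2\,n^{-1/2}\log^2 n\rightarrow 0$, \emph{not} $C^2\sqrt{n}\,\log^2 n$. Hence your bound $K_1N_n^2\exp\{-\nu nc_n^2\}$ does not merely fail to be summable --- it diverges. This is not a repairable typo within your scheme: a Hoeffding inequality with variance proxy of order one can never detect the $n^{-3/4}$ scale, since at that threshold the exponent $nc_n^2$ vanishes. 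What the increment step actually requires is the Bernstein-type, variance-sensitive form of the inequality (Hoeffding 1963 also provides this, and it transfers to the exchangeable/sampling-without-replacement setting just as the crude form does), applied to the indicators $I[y'<nd_{ni}^{(r)}\le y]$, whose mean $p_n(y,y')=G_n^{(r)}(y)-G_n^{(r)}(y')$ is itself small --- of order $n^{-1/2}\sqrt{\log n}$ on the window $|y-y'|\le n^{-1/2}\sqrt{2\log n}$, provided $G_n^{(r)}$ is Lipschitz uniformly in $n$. Then the exponent becomes of order $nc_n^2/p_n\asymp(\log n)^{3/2}$, so $\exp\{-\nu(\log n)^{3/2}\}$ beats every polynomial, your union bound over $O(n^{2(2+\alpha)})$ grid pairs is summable, and Borel--Cantelli delivers the claimed rate (indeed the sharper $O(n^{-3/4}(\log n)^{3/4})$).

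A secondary gap: your discretization step asserts $\sup_{|u-v|\le h_n}|G_n^{(r)}(u)-G_n^{(r)}(v)|=O(h_n)$ from ``absolute continuity of $F$,'' but absolute continuity gives only continuity of $G_n^{(r)}$, not a Lipschitz modulus; what you need is a density for $nd_{ni}^{(r)}$ bounded uniformly in $n$ (the paper glosses over this as well). That same uniform density bound is precisely what makes $p_n(y,y')=O(n^{-1/2}\sqrt{\log n})$ in the Bernstein step, so it should be stated once as a hypothesis and used in both places. With that assumption and the variance-corrected exponential inequality substituted at step three, your mesh/union-bound/Borel--Cantelli scaffolding is sound and matches the paper's intended argument; as written, however, the summability claim on which everything rests is false.
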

\begin{proof} [outline] The lemma follows from the Borel-Cantelli lemma, when we notice that
both $\widehat{G}_n^{(r)}(y)$ and $G_n^{(r)}(y)$ are $\nearrow$ in $y\in\mathbb R^+,$ and that $\widehat{G}_n^{(r)}(0)=G_n^{(r)}(0), \; \widehat{G}_n^{(r)}(\infty)=G_n^{(r)}(\infty)=1.$  
\end{proof}
\begin{theorem}\label{Theorem8.2} Let $$W_{nr}(t)=n^{-1/2}[\widehat{G}_n^{(r)}(G_n^{(r)-1}(t))-t], \; t\in[0,1]; \; W_{nr}=\{W_{nr}(t); \; 0\leq t\leq 1\}.$$
Then $W_{nr}\Rightarrow W$  in the Skorokhod $\mathcal D[0,1]$ topology, where $W$ is a Brownian Bridge on [0,1]. %tied-down Wiener process on [0,1].
\end{theorem}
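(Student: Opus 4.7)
The plan is to show that $W_{nr}$ is asymptotically indistinguishable from the classical uniform empirical process built from the i.i.d.\ scalars $\delta_i^*$, and then to invoke Donsker's theorem. Concretely, set $F_n^*(y)=n^{-1}\sum_{i=1}^n I[\delta_i^*\le y]$ and $U_n(t)=n^{1/2}[F_n^*(G^{*-1}(t))-t]$. Because the $\delta_i^*$ are i.i.d.\ with $G^*$ continuous (by absolute continuity of $F$), the classical Donsker theorem gives $U_n\Rightarrow W$ in $\mathcal{D}[0,1]$ with $W$ a Brownian bridge. The whole task therefore reduces to proving $\sup_{t\in[0,1]}|W_{nr}(t)-U_n(t)|\stackrel{p}{\to}0$.

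First I would chain the three arrays $nd_{ni}^{(r)}$, $\delta_{ni}^{(r)}$, $\delta_i^*$. The Courant bound displayed in the text gives $\max_i|nd_{ni}^{(r)}/\delta_{ni}^{(r)}-1|=o_p(1)$ once $n^{-1}\mathbf{A}_n^{(r)}\stackrel{p}{\to}\boldsymbol{\Sigma}$; combined with the estimate $|\delta_{ni}^{(r)}-\delta_i^*|=O_p(n^{-1/2})$ and the tightness of $\delta_i^*$ in probability, this yields a uniform coupling $\Delta_n:=\max_i|nd_{ni}^{(r)}-\delta_i^*|\stackrel{p}{\to}0$. Sandwiching indicators then gives
$$F_n^*(y-\Delta_n)\le\widehat{G}_n^{(r)}(y)\le F_n^*(y+\Delta_n),$$
with an analogous comparison between $G_n^{(r)}$ and $G^*$. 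Together with the Glivenko--Cantelli theorem for $F_n^*$ and the continuity of $G^*$, this produces $\sup_y|\widehat{G}_n^{(r)}(y)-F_n^*(y)|\stackrel{p}{\to}0$ and $\sup_y|G_n^{(r)}(y)-G^*(y)|\to 0$.

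Next I would upgrade these rates to the $n^{1/2}$ scale. Lemma~\ref{Lemma8.1} supplies the crucial oscillation control: the modulus of continuity of $\widehat{G}_n^{(r)}-G_n^{(r)}$ over argument shifts of order $n^{-1/2}\sqrt{2\log n}$ is a.s.\ $O(n^{-3/4}\log n)$. Applying this with the random time change $y=G_n^{(r)-1}(t)$, and using the strict monotonicity and continuity of $G^*$ (its density is positive on the interior of its support by absolute continuity of $F$) to convert $\sup_y|G_n^{(r)}(y)-G^*(y)|\to 0$ into uniform closeness of the inverses on compact subintervals of $(0,1)$, one can substitute $G^{*-1}(t)$ for $G_n^{(r)-1}(t)$ with error absorbed into the oscillation bound. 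This yields $\sup_t|W_{nr}(t)-U_n(t)|\stackrel{p}{\to}0$, and the theorem then follows by the converging-together lemma in $\mathcal{D}[0,1]$.

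The main obstacle is the joint handling of three intertwined sources of randomness: the exchangeable-but-not-independent structure of the $d_{ni}^{(r)}$ under the linear constraint $\sum_i d_{ni}^{(r)}=p$, the plug-in of the iterative estimator $\mathbf{L}_n^{(r)}$ inside both the centring and the dispersion, and the random time change $G_n^{(r)-1}$. The delicate piece is precisely that the substitution of $G_n^{(r)-1}$ for $G^{*-1}$ shifts the argument by an amount that sits at the critical $n^{-1/2}$ empirical-process scale (up to logarithmic factors), so a pointwise CLT alone is insufficient; the Hoeffding-type exponential inequality for exchangeable bounded variables that underpins Lemma~\ref{Lemma8.1} is what makes the whole reduction work.
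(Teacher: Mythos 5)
There is a genuine gap, and it sits exactly where you flagged the ``delicate piece'': the reduction of $W_{nr}$ to the i.i.d.\ process $U_n$ fails at the $n^{1/2}$ scale. Your coupling error $\Delta_n=\max_i|nd_{ni}^{(r)}-\delta_i^*|$ is driven by $\|\mathbf L_n^{(r)}-\boldsymbol\theta\|=O_p(n^{-1/2})$ and by the $O_p(n^{-1/2})$ eigenvalue fluctuations of $\frac 1n\mathbf A_n^{(r)}$ (the latter multiplied by $\max_i\delta_i^*$, which grows with $n$), so $\Delta_n$ is genuinely of order $n^{-1/2}$, not $o_p(n^{-1/2})$. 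The sandwich $F_n^*(y-\Delta_n)\leq\widehat{G}_n^{(r)}(y)\leq F_n^*(y+\Delta_n)$ then gives, at the relevant scale, $n^{1/2}|\widehat{G}_n^{(r)}(y)-F_n^*(y)|\leq \omega_{U_n}(\Delta_n)+n^{1/2}[G^*(y+\Delta_n)-G^*(y-\Delta_n)]$, and the second term is $\asymp n^{1/2}\Delta_n g^*(y)=O_p(1)$, not $o_p(1)$. The centering difference $n^{1/2}[G_n^{(r)}(y)-G^*(y)]$ is likewise $O(1)$. For $\sup_t|W_{nr}(t)-U_n(t)|\stackrel{p}{\to}0$ these two $O(1)$ drifts would have to cancel exactly, and nothing in your argument shows they do: the first is random (it depends on the realized $\mathbf L_n^{(r)}$ and $\mathbf A_n^{(r)}$), while the second is deterministic --- centering at the unconditional law $G_n^{(r)}$ removes only the \emph{mean} of the plug-in effect, not its $O_p(1)$ fluctuation. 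This is precisely Durbin's classical problem of empirical processes with estimated parameters: the residual-type process $n^{1/2}[\widehat{G}_n^{(r)}\circ G_n^{(r)-1}-t]$ and the i.i.d.\ process $U_n$ in general have \emph{different} Gaussian limits, so the converging-together lemma cannot be invoked from your estimates. A secondary, quantitative gap: to absorb the substitution of $G^{*-1}$ for $G_n^{(r)-1}$ into the oscillation bound of Lemma~\ref{Lemma8.1} you need $\sup_y|G_n^{(r)}(y)-G^*(y)|=O(n^{-1/2}\sqrt{\log n})$, whereas you established only that this supremum tends to zero.

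The paper avoids this trap by never passing through the i.i.d.\ scalars $\delta_i^*$ at all: it treats $\widehat{G}_n^{(r)}(y)-G_n^{(r)}(y)$ directly as a centered average of the \emph{exchangeable} indicators $I[nd_{ni}^{(r)}\leq y]$, obtaining convergence of finite-dimensional distributions from the Chernoff--Teicher (1958) central limit theorem for interchangeable random variables, and tightness from Lemma~\ref{Lemma8.1} (itself resting on the Hoeffding inequality for bounded exchangeable variables plus Borel--Cantelli). The self-centering of the array at its own exact distribution function $G_n^{(r)}$ is what makes the Brownian-bridge limit attainable, and it is exactly the structure your i.i.d.\ reduction discards. (Minor point: you silently replaced the paper's normalization $n^{-1/2}$ by $n^{1/2}$ in defining $W_{nr}$; the paper's display is evidently a typo and your correction is the right reading, but it is worth stating explicitly.) If you want to salvage your route, you would need a genuine Durbin-type expansion of $nd_{ni}^{(r)}-\delta_i^*$ in terms of $n^{1/2}(\mathbf L_n^{(r)}-\boldsymbol\theta)$ and $n^{1/2}(\frac 1n\mathbf A_n^{(r)}-\boldsymbol\Sigma)$ and a proof that the induced drift in $n^{1/2}[\widehat{G}_n^{(r)}-F_n^*]$ is matched by $n^{1/2}[G_n^{(r)}-G^*]$ uniformly in $y$ --- a substantially harder task than the exchangeability argument the paper sketches.
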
 
}
\begin{proof} [outline] The tightness part of the proof follows from Lemma \ref{Lemma8.1}. For the convergence of finite-dimensional distributions, we appeal to the central limit theorem for interchangeable random variables of Chernoff and Teicher (1958).
\end{proof}

{If the $\mathbf X_i$ have multinormal distribution, then $\delta_i^*$ has the Chi-squared distribution with $p$ degrees of freedom. If  $\mathbf X_i$ is elliptically symmetric, then its density depends on $h(\|\mathbf x-\boldgreek\theta\|_{\Sigma}),$ with $h(y), \; y>0$ depending only on the norm $\|y\|.$ If $p\geq 2,$ as it is in our case, it may be reasonable to assume that $H(y)=\int_0^yh(u)du$ behaves as $\sim y^{p/2}$ (or higher power) for $y\rightarrow 0.$ Thus $y\sim[H(y)]^{2/p}$ (or $[H(y)]^r, \; r\leq 2/p$) for $y\rightarrow 0.$ On the other hand, since our choice is $k_n=o(n),$ the proposed estimators $\mathbf L_{n1}$ and $\mathbf L_{n2}$ both depend on the $\mathbf X_i$ with $d_{ni}$ of lower rank ($R_{ni}\leq k_n$ or $n^{-1}R_{ni}\leq n^{-1}k_n\rightarrow 0$). Hence, both $\mathbf L_{n1}$ and $\mathbf L_{n2}$ are close to the induced vector $\mathbf X_{[1]}$ where $[1]=\{i: \; R_{ni}=1\}.$
If the initial estimator is chosen as $\mathbf X_{[1]}$ and $\mathbf A_{n[1]}=n^{-1}\sum_{i=1}^n(\mathbf X_i-\overline{\mathbf X}_{[1]})(\mathbf X_i-\overline{\mathbf X}_{[1]})^{\top},$ then the iteration process will be comparatively faster than if we start with the initial estimators $\overline{\mathbf X}_n$ and $n^{-1}\mathbf A_n^{(0)}$.

The proposed $\mathbf L_{n1}, \; \mathbf L_{n2}$ are both affine equivariant and robust. {If we define  the D-efficiency 
\begin{equation}\label{defficiency}
\mathbf D_n^{(r)}=\left(\frac{|\mathbf A_n^{(r)}|}{|\mathbf A_n^{(0)}|}\right)^{1/p}, \quad r \leq 1,
\end{equation}
then it} will be slightly better than that of the spatial median; the classical $\overline{\mathbf X}_n$ has the best efficiency for multinormal distribution but it is %may no be so for others. 
much less robust than $\mathbf L_{n1}$ and $\mathbf L_{n2}.$
}

\section{Numerical illustration}
\label{sec:3}
\subsection{Multivariate normal distribution}
The procedure is illustrated on  samples of size $ n=100$ simulated from the normal distribution $\mathcal N_3(\boldgreek\theta,\boldgreek\Sigma)$ with  
 with
\begin{equation}\label{parameter}
\boldgreek\theta=\left(\begin{array}{r}
\theta_1\\
\theta_2\\
\theta_3\\
\end{array}\right)=\left(\begin{array}{r}
1\\
2\\
-1\\
\end{array}\right)\qquad \qquad \boldgreek\Sigma=\left[\begin{array}{ccc}
                                     1 &~1/2&~1/2\\
                                     1/2&~ 1 &~1/2\\
                                     1/2&~ 1/2 &~1\\
                                     \end{array}\right]
\end{equation}                                     
and each time the affine-equivariant trimmed $\mathbf L_{n1}$-estimator ($k_n=15$) and affine-equivariant $\mathbf L_{n2}$-estimator were calculated in 10 iterations of the initial estimator.  
5\,000 replications of the model were simulated and also the mean
 was computed, for the sake of comparison. Results  are summarized in Table 1. Figure 1 illustrates the distribution of
estimated  parameters $\theta_1$, $\theta_2$, $\theta_3$  for various iterations of $\mathbf L_{n1}$-estimator and $\mathbf L_{n2}$-estimator and compares them with the mean and median.
Tables 2-3 and Figure 2 compare the D-efficiency of proposed estimators. 

The Mahalanobis distance  is also illustrated. One sample of size $ n=100$ was simulated   from  the bivariate normal distribution with the above parameters. Afterwards, the Mahalanobis distances $d_{ii} = (X_i - \bar{X} )^T S_n^{-1} (X_i - \bar{X}),\  i = 1,\ldots, n$ were calculated. They represent $n$  co-axial ellipses centered at $\bar{X}$ - see Figure 3 (black ellipses).  The modified Mahalanobis distances  replaced $\bar{X}$ by  the affine-equivariant trimmed $\mathbf L_{n1}$-estimator  with $k_n=15$ (see the blue ellipses on Figure 3) and  affine-equivariant $\mathbf L_{n2}$-estimator (see the red ellipses on Figure 3) with analogous modification of $S_n$.

%The following tables illustrate the results and compare them with mean, median, and provide standard errors of the estimates for better comparison.                 

\bigskip

%\bigskip

%\bigskip

%\bigskip

\begin{center}

\textit{Table 1. Normal distribution: 
The mean in the sample of 5 000 replications of  estimators $\mathbf L_{n1}$ (trimmed) and $\mathbf L_{n2},$ %(original),    
sample sizes  $n=100$}\\[3mm]

\begin{tabular}{|c|lll|lll|}
\hline
&&&&&&\\[-2mm]
i     & \multicolumn{3}{|c|}{${\mathbf L}_{n1}^{(i)}$}&\multicolumn{3}{|c|}{$\mathbf L_{n2}^{(i)}$}\\[1mm]
\hline
1 & 0.999607 & 2.001435 & -0.998297 & 0.999401 & 1.999796 & -0.999943 \\
  2 & 0.999473 & 2.001423 & -0.996185 & 0.999083 & 1.999584 & -0.999782 \\
  3 & 0.999519 & 2.000290 & -0.993274 & 0.998926 & 1.999509 & -0.999801 \\
  4 & 0.999435 & 2.000190 & -0.991901 & 0.998854 & 1.999496 & -0.999871 \\
  5 & 0.999474 & 2.000771 & -0.991295 & 0.998811 & 1.999476 & -0.999973 \\
  6 & 0.999646 & 2.001285 & -0.990964 & 0.998781 & 1.999471 & -1.000032 \\
  7 & 0.999926 & 2.001519 & -0.990829 & 0.998773 & 1.999470 & -1.000049 \\
  8 & 0.999952 & 2.001529 & -0.990803 & 0.998772 & 1.999472 & -1.000068 \\
  9 & 0.999939 & 2.001497 & -0.990745 & 0.998775 & 1.999489 & -1.000071 \\
  10 & 0.999853 & 2.001424 & -0.990711 & 0.998779 & 1.999497 & -1.000061 \\
   \hline
\end{tabular}

\end{center}

%\newpage
%\bigskip

%\bigskip

%\bigskip

\begin{figure}[h]
\begin{center}
\includegraphics[height=7.3cm]{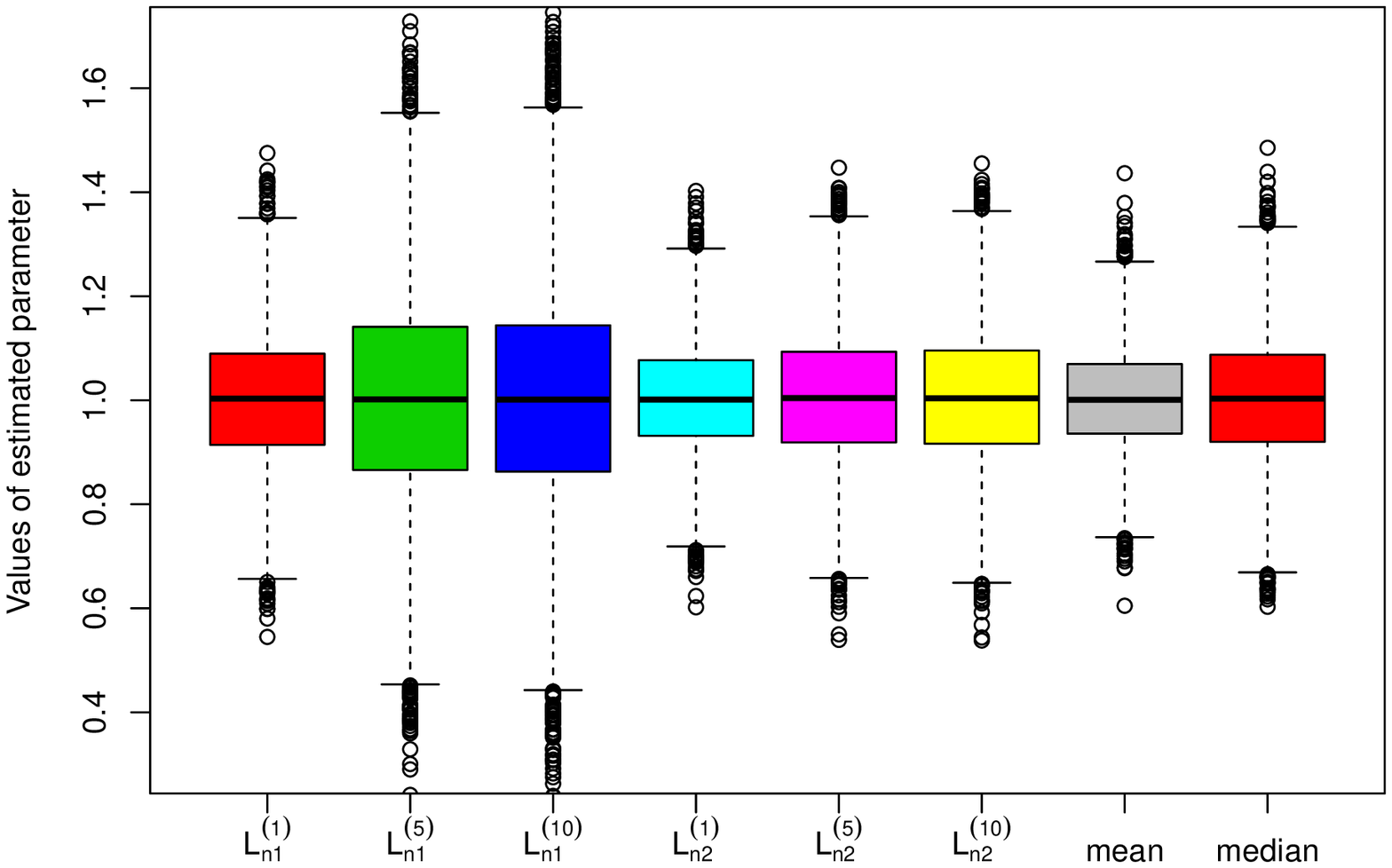}\\[-13mm]
\includegraphics[height=7.3cm]{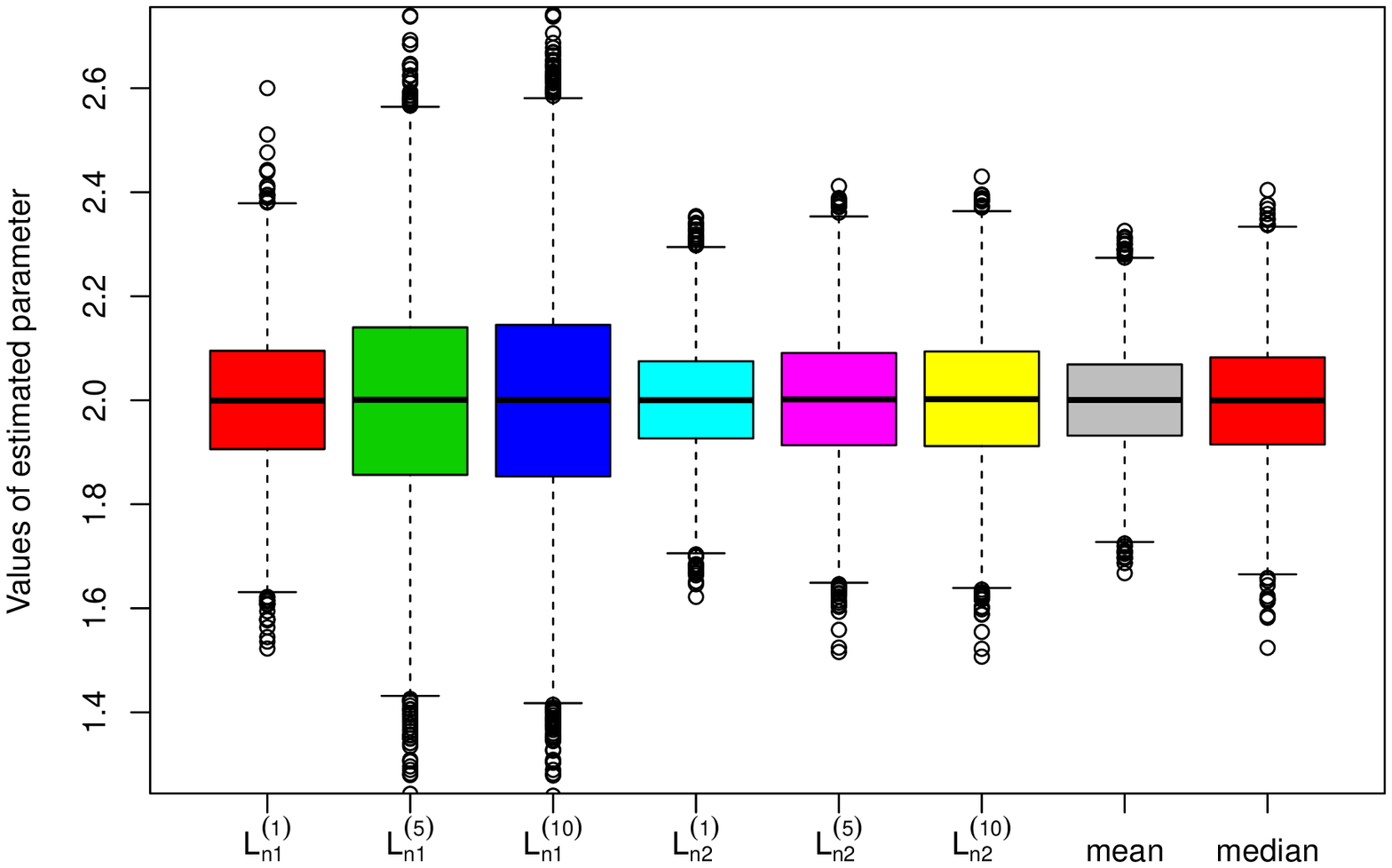}\\[-13mm]
\includegraphics[height=7.3cm]{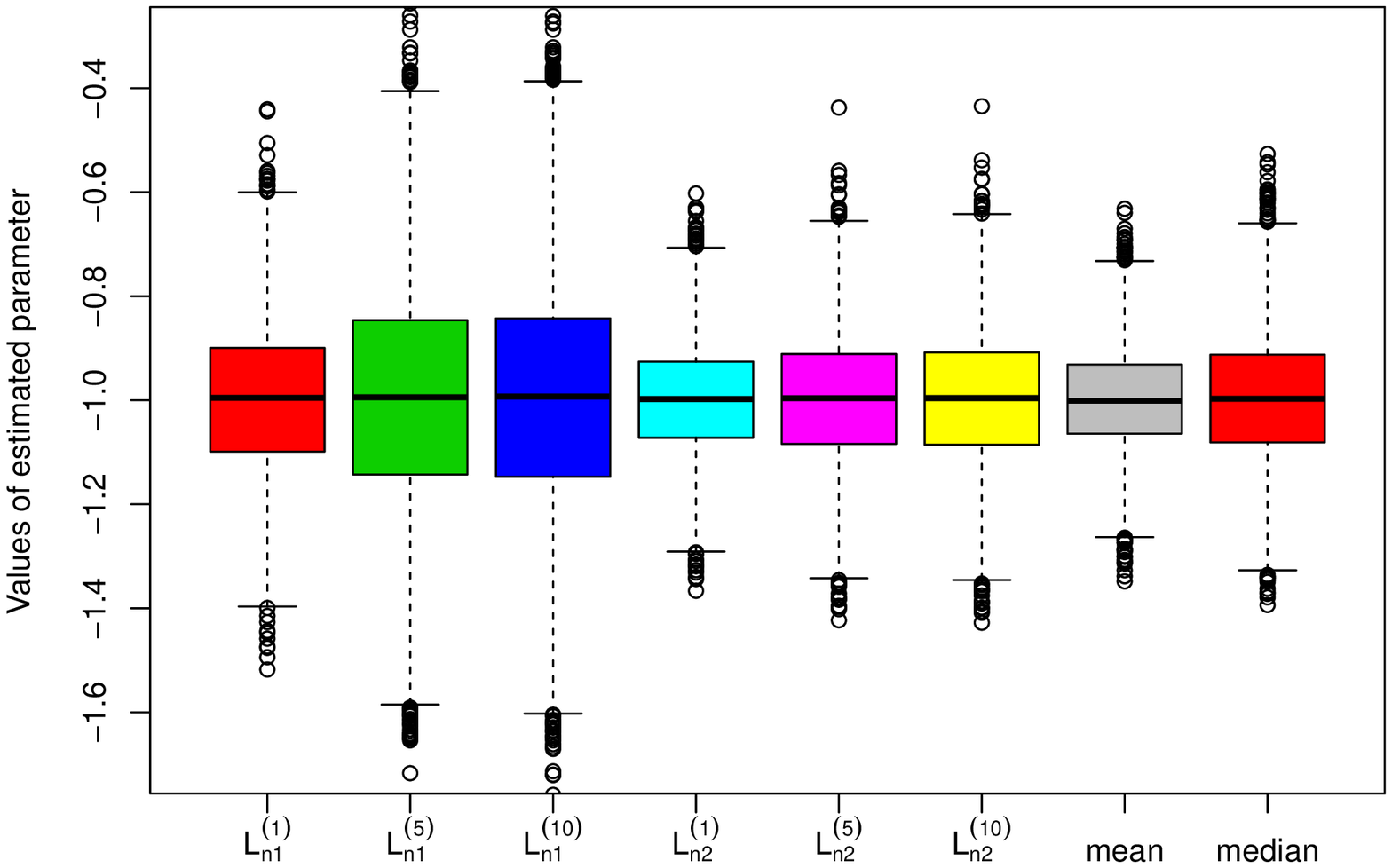}\\[-3mm]
\caption{Normal distribution: Box-plots of the 5\,000 estimated values of $\theta_1(=1)$ (top), $\theta_2(=2)$ (middle) and $\theta_3(=-1)$ (bottom)   for 
the $\mathbf L_{n1}^{(1)}$, $\mathbf L_{n1}^{(5)}$, $\mathbf L_{n1}^{(10)}$, $\mathbf L_{n2}^{(1)}$, $\mathbf L_{n2}^{(5)}$, $\mathbf L_{n2}^{(10)}$, mean and median.
}%
\label{figure1}
\end{center}
\end{figure}

%\newpage 

\begin{center}
\textit{Table 2. Normal distribution:
The mean, median and minimum of D-efficiency in the sample of 5 000 replications of  estimators $\mathbf L_{n1}$ (trimmed) and $\mathbf L_{n2},$ %(original),    
sample sizes  $n=100$ }\\[3mm]
\begin{tabular}{|c|lll|lll|}
\hline
     &&&&&&\\[-2mm]
     & \multicolumn{3}{|c|}{${\mathbf L}_{n1}^{(i)}$ }&\multicolumn{3}{|c|}{$\mathbf L_{n2}^{(i)}$}\\[1mm] 
\hline
iterration & mean & median & minimum & mean & median & minimum\\[1mm] 
\hline   
2 & 1.000637 & 0.999615 & 0.871029 & 0.999862 & 0.999670 & 0.951057 \\ 
 3 & 1.001272 & 0.998593 & 0.818591 & 0.999625 & 0.999451 & 0.946809 \\ 
  4 & 1.001106 & 0.997822 & 0.793030 & 0.999392 & 0.999231 & 0.946575 \\ 
  5 & 1.000697 & 0.997283 & 0.794157 & 0.999205 & 0.999041 & 0.946332 \\ 
  6 & 1.000394 & 0.997211 & 0.793903 & 0.999078 & 0.998886 & 0.946022 \\ 
  7 & 1.000131 & 0.997122 & 0.793903 & 0.998997 & 0.998802 & 0.945613 \\ 
  8 & 0.999916 & 0.996964 & 0.793903 & 0.998951 & 0.998807 & 0.945032 \\ 
  9 & 0.999882 & 0.996924 & 0.793903 & 0.998921 & 0.998768 & 0.944518 \\ 
  10 & 0.999860 & 0.996924 & 0.793903 & 0.998899 & 0.998706 & 0.944272 \\ 
   \hline
\end{tabular}

\bigskip

\bigskip

%\newpage

\textit{Table 3. Normal distribution: 
The 25\%-quantile, 75\%-quantile and max of D-efficiency in the sample of 5000 replications of  estimators $\mathbf L_{n1}$ (trimmed) and $\mathbf L_{n2},$ %(original),    
sample sizes  $n=100$ }\\[3mm]
\begin{tabular}{|c|lll|lll|}
\hline
     &&&&&&\\[-2mm]
     & \multicolumn{3}{|c|}{${\mathbf L}_{n1}^{(i)}$ }&\multicolumn{3}{|c|}{$\mathbf L_{n2}^{(i)}$}\\[1mm] 
\hline   
\hline
iteration & 25\%-quantile & 75\%-quantile & max & 25\%-quantile & 75\%-quantile & max\\[1mm] 
\hline   
2 & 0.971669 & 1.028444 & 1.169137 & 0.990914 & 1.008819 & 1.057115 \\ 
 3 & 0.960891 & 1.039437 & 1.295245 & 0.989675 & 1.009489 & 1.065753 \\ 
  4 & 0.956068 & 1.042774 & 1.294952 & 0.989358 & 1.009426 & 1.068542 \\ 
  5 & 0.953186 & 1.043542 & 1.301147 & 0.989098 & 1.009239 & 1.069236 \\ 
  6 & 0.952193 & 1.044435 & 1.305327 & 0.989000 & 1.009182 & 1.069437 \\ 
  7 & 0.951535 & 1.044942 & 1.326996 & 0.988916 & 1.009142 & 1.069600 \\ 
  8 & 0.951441 & 1.044394 & 1.330791 & 0.988839 & 1.009109 & 1.069226 \\ 
  9 & 0.951452 & 1.044562 & 1.330791 & 0.988802 & 1.009087 & 1.069236 \\ 
  10 & 0.951356 & 1.044749 & 1.330791 & 0.988771 & 1.009062 & 1.069278 \\ 
 \hline
\end{tabular}
\end{center}

%\bigskip

\newpage

\begin{figure}
\begin{center}
\includegraphics[height=7.3cm]{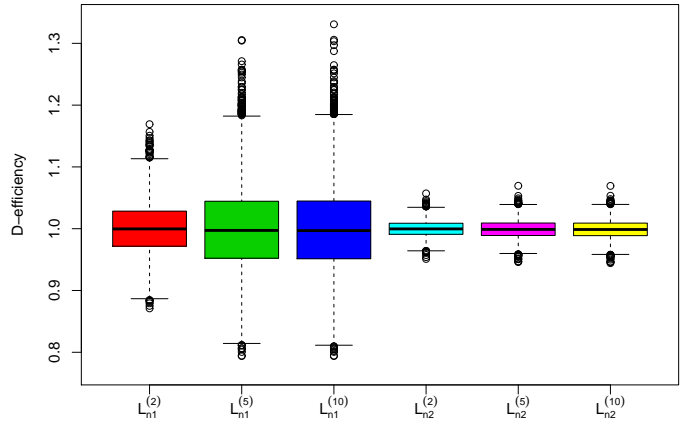}\\[-3mm]
\caption{Normal distribution: Box-plots of the 5\,000 estimated values of D-efficiency   for 
the $\mathbf L_{n1}^{(2)}$, $\mathbf L_{n1}^{(5)}$, $\mathbf L_{n1}^{(10)}$, $\mathbf L_{n2}^{(2)}$, $\mathbf L_{n2}^{(5)}$, $\mathbf L_{n2}^{(10)}$.
}%
\label{figure2}
\end{center}
\end{figure}

\begin{figure}
\begin{center}
\begin{tabular}{cc}
\includegraphics[height=5.5cm]{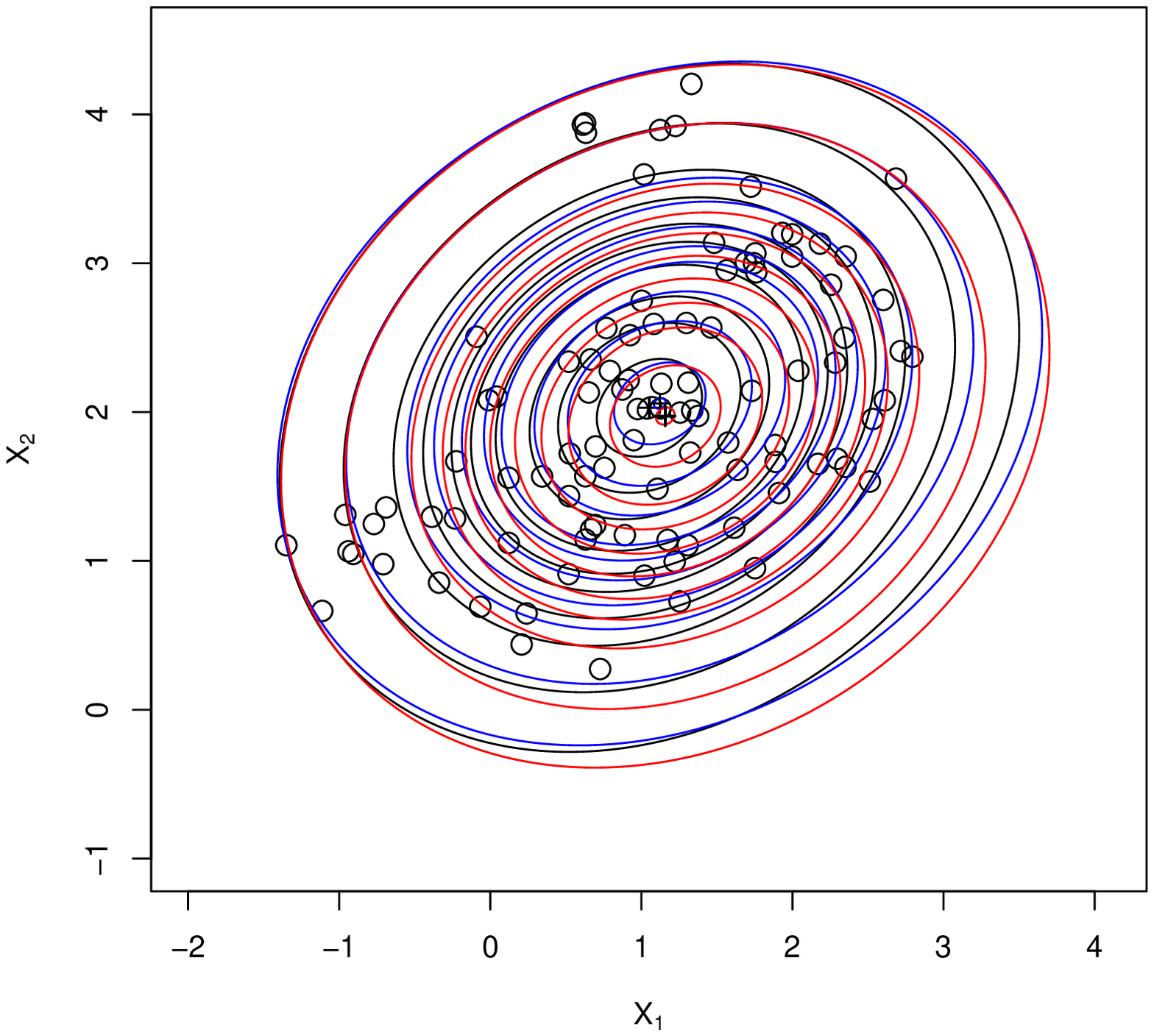} & 
\includegraphics[height=5.5cm]{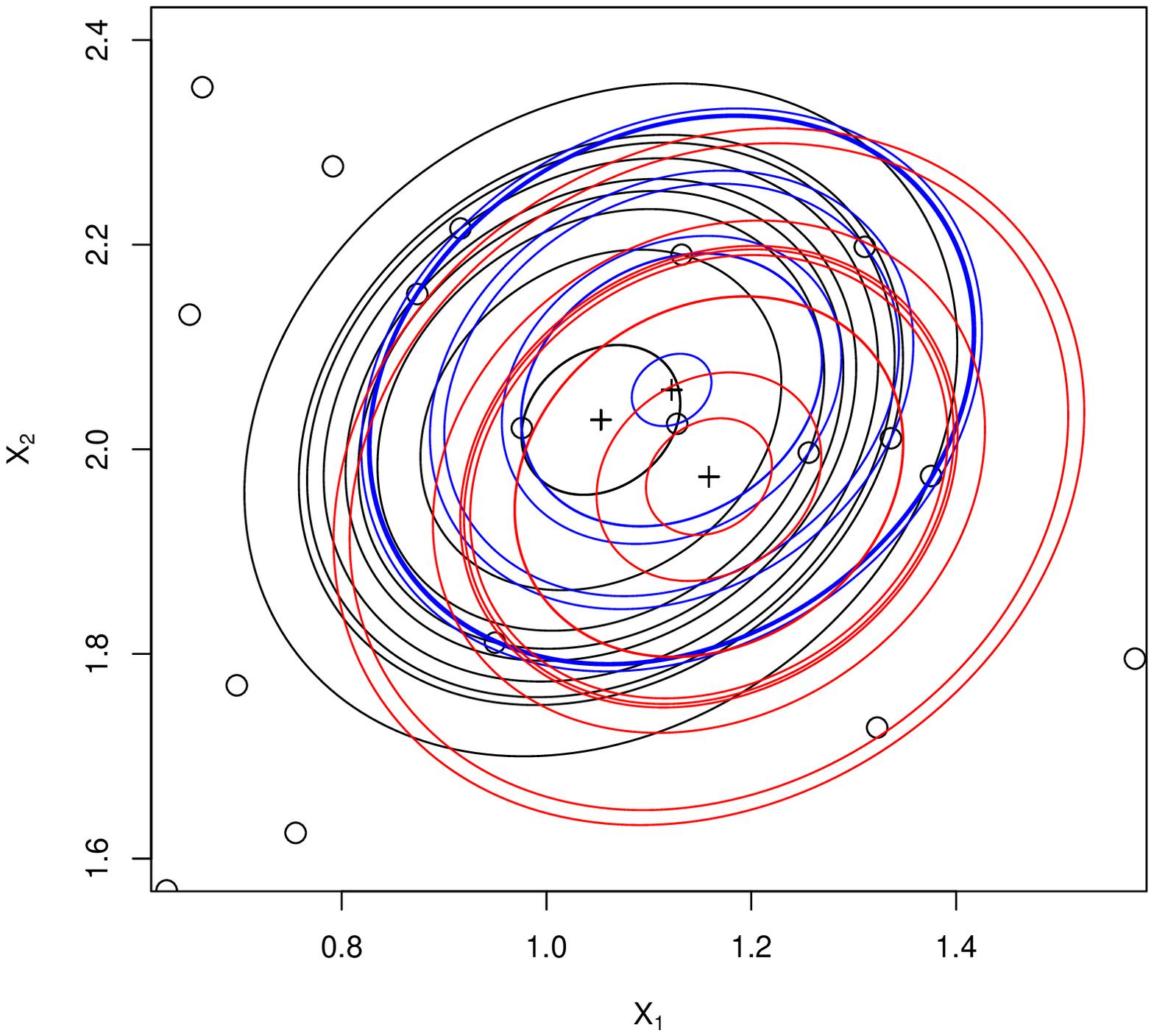}
\end{tabular}
\caption{Normal distribution: Mahalanobis distances represented by co-axial ellipses centered at the mean $\bar{X}$ (black), at the trimmed $\mathbf L_{n1}$-estimator (blue) and at the $\mathbf L_{n2}$-estimator (red).   All simulated bivariate data  with the every tenth contour are illustrated on the left,  detail of the center with the first ten contours is on the right.
}%
\label{figure11}

\end{center}
\end{figure}

\newpage
\subsection{Multivariate t-distribution}
Similarly, we illustrate the procedure on samples of size $ n=100$ simulated from the multivariate $t$ distribution with 3 degree of freedom $t_3(\boldgreek\theta,\boldgreek\Sigma),$ with the same parameters as in (\ref{parameter}). 
Each time, 10 iterations of affine-equivariant trimmed $\mathbf L_{n1}$-estimator ($k_n=15$) and of affine-equivariant $L_{n2}$-estimator, started from the initial estimator, were calculated.  
5\,000 replications of the model were simulated and the mean
 was computed, for the sake of comparison. Results  are summarized in Table 4. Figure 4 illustrates the distribution of
estimated  parameters $\theta_1$, $\theta_2,$ $\theta_3$  for various iterations of $\mathbf L_{n1}$-estimator and $\mathbf L_{n2}$-estimator and compares them with the mean and median.
The Tables 5-6 and Figure 5 compare the D-efficiencies of the proposed estimators and Figure 6  illustrates the  Mahalanobis distances.

\bigskip

%\newpage
\begin{center}
\textit{Table 4. $t$-distribution: 
The mean in the sample of 5 000 replications of  estimators $\mathbf L_{n1}$ (trimmed) and $\mathbf L_{n2},$ %(original),    
sample sizes  $n=100$}\\[3mm]

\begin{tabular}{|c|lll|lll|}
\hline
&&&&&&\\[-2mm]
i     & \multicolumn{3}{|c|}{${\mathbf L}_{n1}^{(i)}$}&\multicolumn{3}{|c|}{$\mathbf L_{n2}^{(i)}$}\\[1mm]
\hline
1 & 1.004760 & 2.002252 & -0.992618 & 1.003081 & 2.001199 & -0.998781 \\ 
  2 & 1.005034 & 2.001851 & -0.991150 & 1.002154 & 2.000044 & -0.999996 \\ 
  3 & 1.005473 & 2.001430 & -0.991330 & 1.001744 & 1.999661 & -1.000627 \\ 
  4 & 1.005334 & 2.000732 & -0.992535 & 1.001594 & 1.999526 & -1.000909 \\ 
  5 & 1.005351 & 2.000951 & -0.993532 & 1.001540 & 1.999475 & -1.001028 \\ 
  6 & 1.005007 & 2.001127 & -0.994361 & 1.001528 & 1.999452 & -1.001069 \\ 
  7 & 1.004636 & 2.001009 & -0.994817 & 1.001522 & 1.999447 & -1.001086 \\ 
  8 & 1.004520 & 2.000930 & -0.995011 & 1.001524 & 1.999443 & -1.001090 \\ 
  9 & 1.004466 & 2.000911 & -0.995172 & 1.001526 & 1.999442 & -1.001090 \\ 
  10 & 1.004445 & 2.000821 & -0.995221 & 1.001527 & 1.999444 & -1.001086 \\ 
   \hline
\end{tabular}

%\newpage
\bigskip

\bigskip

%\bigskip

%\bigskip

%\bigskip

%\bigskip

%\bigskip

\textit{Table 5. $t$-distribution:
The mean, median and minimum of D-efficiency in the sample of 5 000 replications of  estimators $\mathbf L_{n1}$ (trimmed) and $\mathbf L_{n2},$ %(original),    
sample sizes  $n=100$ }\\[3mm]
\begin{tabular}{|c|lll|lll|}
\hline
     &&&&&&\\[-2mm]
     & \multicolumn{3}{|c|}{${\mathbf L}_{n1}^{(i)}$ }&\multicolumn{3}{|c|}{$\mathbf L_{n2}^{(i)}$}\\[1mm] 
\hline
iterration & mean & median & minimum & mean & median & minimum\\[1mm] 
\hline   
2 & 1.001813 & 1.000857 & 0.896048 & 1.000812 & 1.000303 & 0.857210 \\ 
3 & 1.001827 & 1.001157 & 0.824105 & 1.000556 & 1.000109 & 0.845643 \\ 
4 & 1.001377 & 1.000619 & 0.810082 & 1.000360 & 0.999912 & 0.844578 \\ 
5 & 1.000887 & 0.999840 & 0.796776 & 1.000260 & 0.999766 & 0.844182 \\ 
6 & 1.000372 & 0.999121 & 0.777458 & 1.000214 & 0.999723 & 0.843850 \\ 
7 & 1.000024 & 0.999086 & 0.756777 & 1.000196 & 0.999740 & 0.843933 \\ 

8 & 0.999881 & 0.998921 & 0.756555 & 1.000187 & 0.999714 & 0.843928 \\ 
9 & 0.999806 & 0.998907 & 0.756655 & 1.000184 & 0.999726 & 0.843917 \\ 
10& 0.999753 & 0.998921 & 0.756655 & 1.000183 & 0.999726 & 0.843897 \\ 

   \hline
\end{tabular}

\bigskip

%\newpage

%\vspace{2cm}

\textit{Table 6. $t$-distribution: 
The 25\%-quantile, 75\%-quantile and max of D-efficiency in the sample of 5 000 replications of  estimators $\mathbf L_{n1}$ (trimmed) and $\mathbf L_{n2},$ %(original),    
sample sizes  $n=100$ }\\[3mm]
\begin{tabular}{|c|lll|lll|}
\hline
     &&&&&&\\[-2mm]
     & \multicolumn{3}{|c|}{${\mathbf L}_{n1}^{(i)}$ }&\multicolumn{3}{|c|}{$\mathbf L_{n2}^{(i)}$}\\[1mm] 
\hline   
\hline
iterration & 25\%-quantile & 75\%-quantile & max & 25\%-quantile & 75\%-quantile\\[1mm] 
\hline   
2 & 0.980004 & 1.023251 & 1.157852 & 0.986053 & 1.014785 & 1.194658 \\ 
3 & 0.972157 & 1.030707 & 1.212343 & 0.983882 & 1.016369 & 1.212103 \\ 
4 & 0.967982 & 1.032541 & 1.214165 & 0.983518 & 1.016634 & 1.214597 \\ 
5 & 0.967034 & 1.032940 & 1.222945 & 0.983380 & 1.016629 & 1.215499 \\ 
6 & 0.966116 & 1.032850 & 1.260563 & 0.983366 & 1.016565 & 1.215862 \\ 
7 & 0.965784 & 1.032895 & 1.261640 & 0.983391 & 1.016572 & 1.216013 \\ 
8 & 0.965561 & 1.032746 & 1.261675 & 0.983406 & 1.016537 & 1.216332 \\ 
9 & 0.965436 & 1.032643 & 1.261675 & 0.983380 & 1.016522 & 1.216431 \\ 
10& 0.965404 & 1.032680 & 1.261675 & 0.983406 & 1.016522 & 1.216579 \\

 \hline
\end{tabular}
\end{center}
Although $L_n^{(1)}$ resembles the NN-estimator, its behavior for $t$-distribution reveals its robustness no less than $L_n^{(2)}.$ For multivariate normal distribution, both $L_n^{(1)}$ and $L_n^{(2)}$ seem to be doing well against outliers.  Figures \ref{figure3} and \ref{figure4} illustrate this feature in a visible way.

\bigskip

\begin{figure}[h]
\begin{center}
\includegraphics[height=7.3cm]{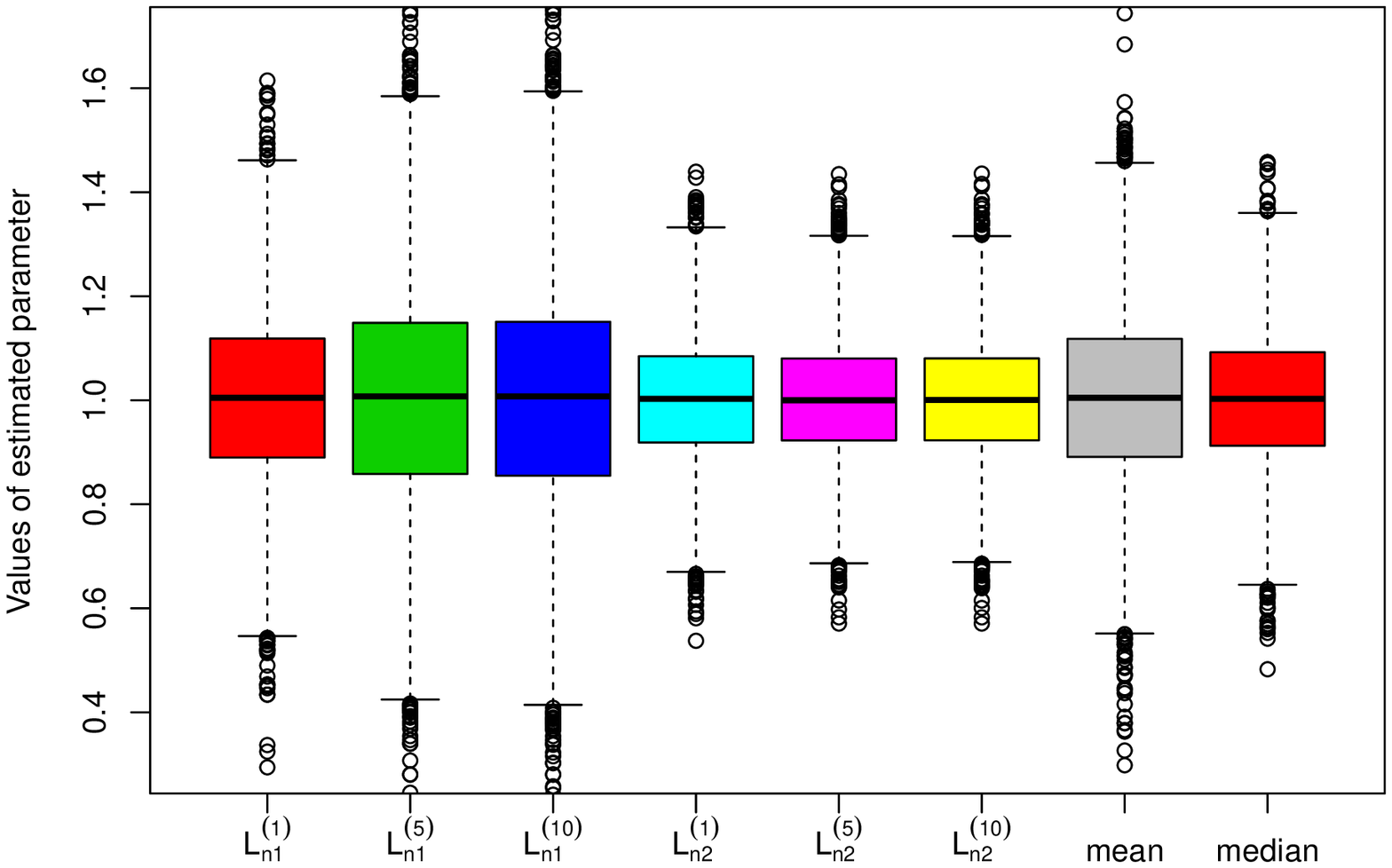}\\[-13mm]
\includegraphics[height=7.3cm]{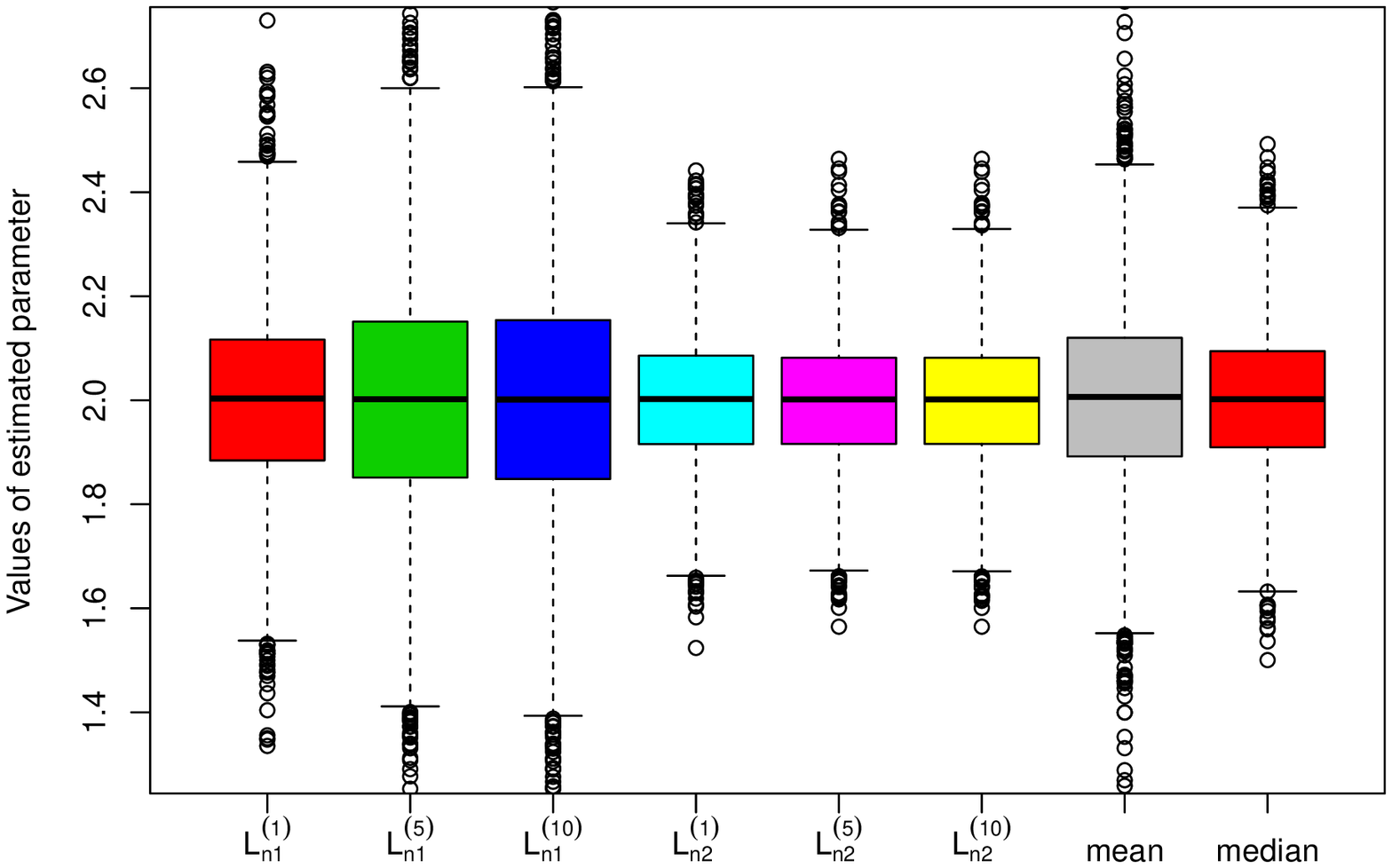}\\[-13mm]
\includegraphics[height=7.3cm]{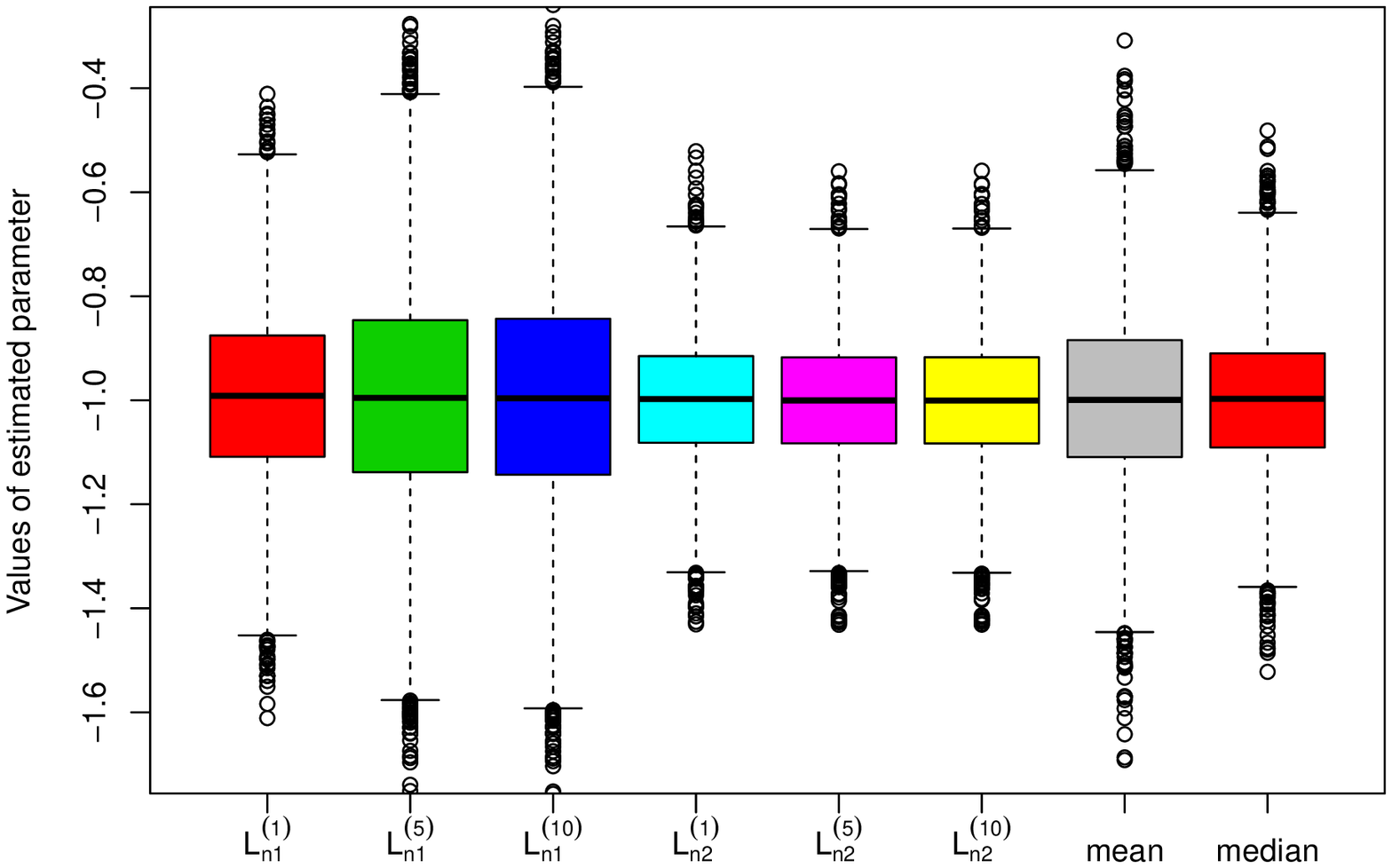}\\[-3mm]
\caption{$t$-distribution: Box-plots of the 5\,000 estimated values of $\theta_1(=1)$ (top), $\theta_2(=2)$ (middle) and $\theta_3(=-1)$ (bottom)   for 
the $\mathbf L_{n1}^{(1)}$, $\mathbf L_{n1}^{(5)}$, $\mathbf L_{n1}^{(10)}$, $\mathbf L_{n2}^{(1)}$, $\mathbf L_{n2}^{(5)}$, $\mathbf L_{n2}^{(10)}$, mean and median.
}%
\label{figure3}
\end{center}
\end{figure}

\bigskip

\begin{figure}
\begin{center}
\includegraphics[height=7.3cm]{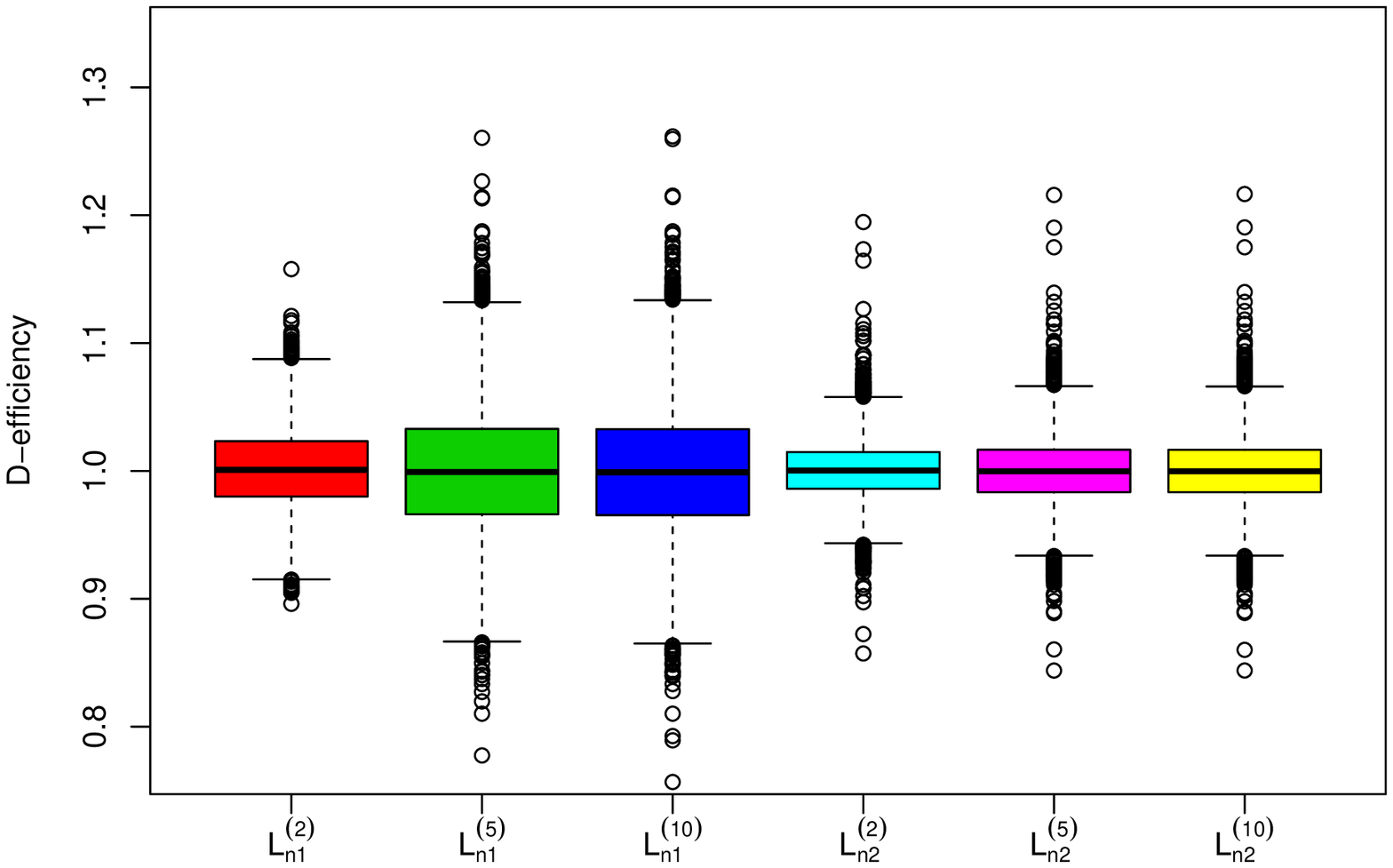}\\[-3mm]
\caption{$t$-distribution: Box-plots of the 5\,000 estimated values of D-efficiency   for 
the $\mathbf L_{n1}^{(2)}$, $\mathbf L_{n1}^{(5)}$, $\mathbf L_{n1}^{(10)}$, $\mathbf L_{n2}^{(2)}$, $\mathbf L_{n2}^{(5)}$, $\mathbf L_{n2}^{(10)}$.
}%
\label{figure4}
\end{center}
\end{figure}

\begin{figure}
\begin{center}
\begin{tabular}{cc}
\includegraphics[height=5.5cm]{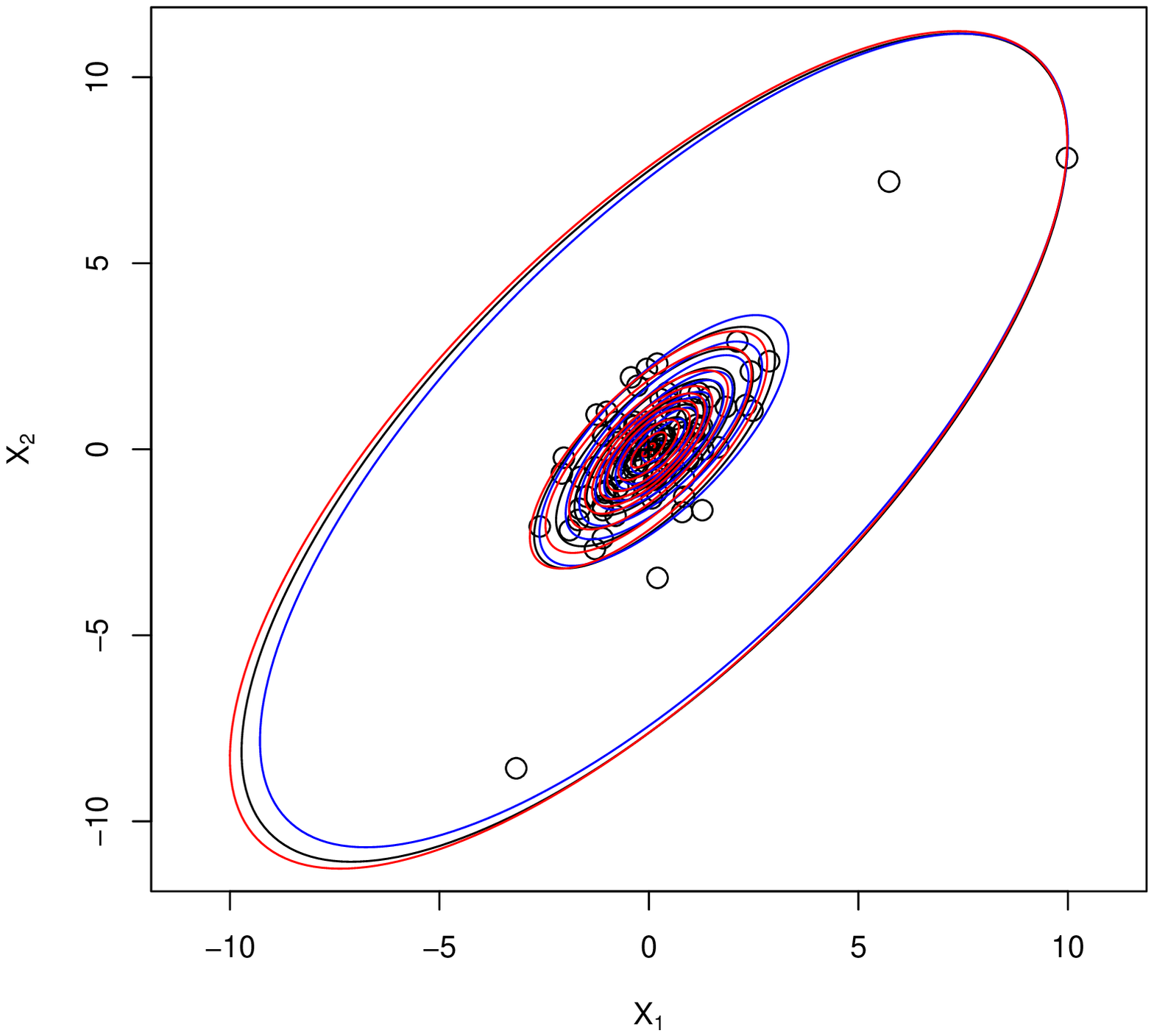} & 
\includegraphics[height=5.5cm]{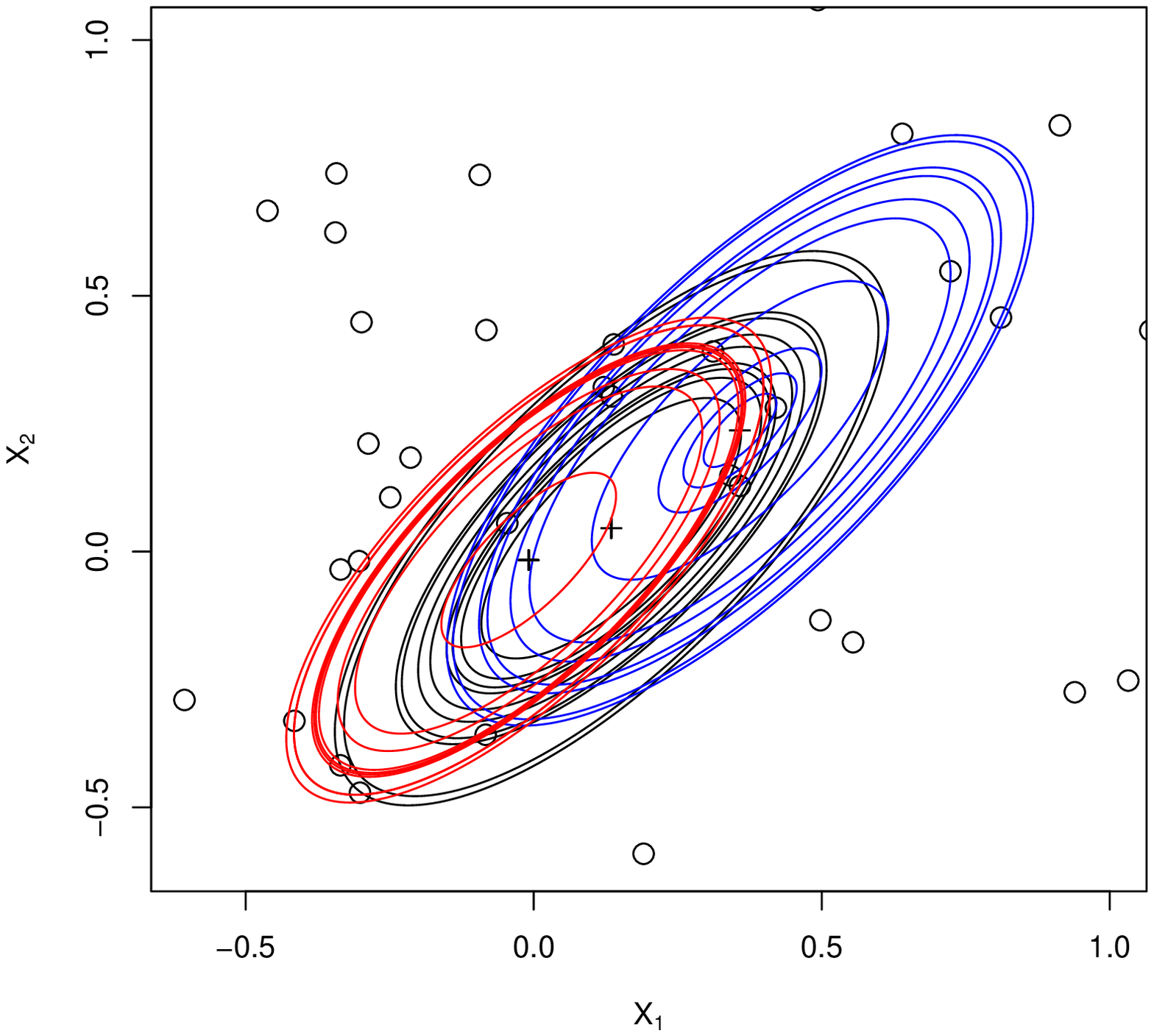}
\end{tabular}
\caption{$t$-distribution: Mahalanobis distances represented by co-axial ellipses centered at the mean $\bar{X}$ (black), at the  trimmed $\mathbf L_{n1}$-estimator (blue) and  at the $\mathbf L_{n2}$-estimator (red). All simulated bivariate data with the every tenth contour are illustrated on the left, detail of the center with the first ten contours is on the right.
}%
\label{figure12}
\end{center}
\end{figure}

\begin{acknowledgement}
We highly appreciate the long-term cooperation with Hannu Oja and his group, and hope in new fresh interesting joint ideas in the future. We also thank the Editors for the organization of this Volume.

The authors thank two Referees for their comments, which helped to better understanding the text. P. K. Sen gratefully acknowledges the support of the Cary C. Boshamer Research Foundation at UNC. Research of J. Jure\v{c}kov\'a and of J. Picek was supported by the Czech Republic Grant 15--00243S. 

\end{acknowledgement}

%\newpage

\end{document}